\newtheorem{theorem}{Theorem}[section]
\newtheorem{proposition}[theorem]{Proposition}
\theoremstyle{definition}
\newtheorem{definition}[theorem]{Definition}
\newtheorem{example}[theorem]{Example}
\newtheorem{problem}[theorem]{Problem}
\newtheorem{remark}[theorem]{Remark}
\begin{document}

\title{Characterization for entropy of shifts of finite type on Cayley trees}

\keywords{Shifts of finite type; Cayley tree; entropy; Perron number; entropy minimality}
\subjclass{37E05, 11A63}


\author{Jung-Chao Ban}
\address[Jung-Chao Ban]{Department of Applied Mathematics, National Dong Hwa University, Hualien 970003, Taiwan, R.O.C.}
\email{jcban@gms.ndhu.edu.tw}

\author[Chih-Hung Chang]{Chih-Hung Chang*}
\thanks{*To whom correspondence should be addressed}
\address[Chih-Hung Chang]{Department of Applied Mathematics, National University of Kaohsiung, Kaohsiung 81148, Taiwan, R.O.C.}
\email{chchang@nuk.edu.tw}

\date{May 1, 2017}

\begin{abstract}
The notion of tree-shifts constitutes an intermediate class in between one-sided shift spaces and multidimensional ones. This paper proposes an algorithm for computing of the entropy of a tree-shift of finite type. Meanwhile, the entropy of a tree-shift of finite type is $\dfrac{1}{p} \ln \lambda$ for some $p \in \mathbb{N}$, where $\lambda$ is a Perron number. This extends Lind's work on one-dimensional shifts of finite type. As an application, the entropy minimality problem is investigated, and we obtain the necessary and sufficient condition for a tree-shift of finite type being entropy minimal with some additional conditions.
\end{abstract}
\maketitle
\baselineskip=1.2 \baselineskip

\section{Introduction}

A one-dimensional shift space is a set consisting of right-infinite or bi-infinite words which avoid words in a so-called \emph{forbidden set} $\mathcal{F}$.
Such a shift space is denoted by $\mathsf{X}_{\mathcal{F}}$. It is no doubt that the most interesting $\mathsf{X}_{\mathcal{F}}$ which has been extensively investigated is the shift of finite type (SFT). That is, the shift space $\mathsf{X}_{\mathcal{F}}$ such that $\mathcal{F}$ is a finite set.

SFT plays an important role in symbolic dynamical systems and some other fields. For example, Shannon used it as a model of discrete communication channels \cite{Shannon-Bstj1949} in information theory. In dynamical systems, due to the work of Adler and Weiss \cite{Adler-1970} and Bowen \cite{Bowen-AJoM1970} that any expansive map admits a Markov partition, SFTs are used to study the hyperbolic dynamics and the classification theory of the Anosov and Axiom A diffeomorphisms. Meanwhile, SFT is also a useful tool for investigating the Lipschitz equivalence and thermodynamic properties in fractal geometry \cite{F-2003, RRW-TotAMS2012}.

A significant invariant of the SFTs is its topological entropy, which measures the growth rate of the number of the admissible patterns. Such an invariant reflects the complexity on its own right, we refer readers to \cite{ASY-1997} for more details. For a one-dimensional SFT, since its topological entropy is the logarithm of the spectral radius of a certain non-negative integral matrix \cite{LM-1995}, the topological entropy are thus being easily calculated. Such calculation method also leads to the further classification of the SFTs. The algebraic characterization of the topological entropy for SFTs is given by D.~Lind \cite{Lind-ETDS1984}, which reveals that such numbers are Perron numbers. (Recall that a Perron number is a real algebraic integer greater than $1$ and greater than the modulus of its algebraic conjugates.) More precisely, the entropies of one-dimensional SFTs are exactly the non-negative rational multiples of logarithms of Perron numbers.

The scenario for multidimensional cases is dramatically different. For example, unlike the one-dimensional case, the computations of the topological entropies for multidimensional SFTs are difficult and there is no general method. Very few models one can compute their rigorous value of entropy. Some approximation algorithms can be found in \cite{BL-DCDS2005, Friedland-2003, MP-ETDS2013, MP-SJDM2013, MP-1979}. For the entropy classification theory, the celebrated result of Hochman and Meyerovitch \cite{HM-AoM2010} indicated that the topological entropy of a multidimensional SFT is right recursively enumerable (RRE for short). Roughly speaking, it is the infimum of a monotonic recursive sequence of rational numbers. Such result has been improved by Hochman \cite{Hochm-IM2009} to the multidimensional effective dynamical systems. Later, Pavlov and Schraudner \cite{PS-TAMS2015} showed that, for every $d\geq 3$ and every $\mathbb{Z}^{d} $ full shift, there is a block gluing $\mathbb{Z}^{d}$ SFT which shares identical topological entropy.

It is worth pointing out that the reason which causes these differences between one- and multidimensional shift spaces are the spatial structure. One-dimensional lattice $\mathbb{Z}$ is a free group with one generator while multidimensional lattice $\mathbb{Z}^{d}$, $d\geq 2$, is an abelian group with $d$ generators. Thus, the investigation of symbolic dynamics on Cayley trees arises naturally. Aubrun and B\'{e}al \cite{AB-TCS2012, AB-TCS2013} introduced the notion of tree-shifts, which is a special kind of Cayley trees, and then studied the classification theory up to conjugacy, languages, and its application to automaton theory. An important point to note here is that such a shift constitutes an intermediate class in between one-sided shifts and multidimensional ones. Thus, it sheds some new light on the study of multidimensional SFTs. In \cite{BC-2015a}, the formal definition of the entropy of a tree-shift was given and the authors demonstrated that the computation of entropy of a tree-shift of finite type (TSFT) is equivalent to solving a system of nonlinear recursive equations (SNRE), and vice versa. Note that the computation of the rigorous value of entropy is not easy due to the doubly exponential growth rate of the patterns for a TSFT (see Section 2 for more details). Some partial results can be found in \cite{BC-2015a}.

The aim of this paper is to investigate classification theory of entropy. Our theorem below provides a natural and complete characterization of the entropy of TSFTs in an algebraic viewpoint.

\begin{theorem}\label{Thm: 2}
The set of entropies of tree-shifts of finite type is
\begin{equation}
\mathcal{E} = \left\{\frac{1}{p} \ln \lambda: \lambda \in \mathcal{P}, p \geq 1 \right\},  \label{35}
\end{equation}%
where $\mathcal{P}$ stands for the set of Perron numbers.
\end{theorem}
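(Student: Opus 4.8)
The plan is to establish the claimed equality of sets by proving the two inclusions separately, using throughout the equivalence of \cite{BC-2015a} between $h(X)$ for a tree-shift of finite type $X$ and the asymptotics of the associated system of nonlinear recursive equations. After passing to a higher block presentation — a conjugacy, hence entropy preserving — one may assume that $X$ is a nearest-neighbour (vertex) TSFT on the binary tree, described by sets $S_a\subseteq\mathcal A\times\mathcal A$ of admissible child pairs for the symbols $a\in\mathcal A$. Let $N_n(a)$ count the admissible patterns on the height-$n$ subtree whose root carries $a$, and put $w_n=\sum_{a\in\mathcal A}N_n(a)$, so that $N_{n+1}(a)=\sum_{(b,c)\in S_a}N_n(b)N_n(c)$; recall from \cite{BC-2015a} that $h(X)$ is then read off from the limiting behaviour of the ratios $w_{n+1}/w_n^{2}$ (a Ces\`aro limit of $\ln(w_{n+1}/w_n^{2})$ when they oscillate).

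For the inclusion $\supseteq$ the approach is explicit realisation. Fix $\mu\in\mathcal P$ and $p\ge 1$, and write $\tfrac1p\ln\mu=\ln\nu$ with $\nu:=\mu^{1/p}$, which is an algebraic integer at least as large as the modulus of each of its conjugates (a ``weak Perron'' number); by Lind's realisation theorem \cite{Lind-ETDS1984} there is a non-negative integer matrix $A$, which we may take $0$--$1$ after the usual recoding, with spectral radius $\rho(A)=\nu$. Let $X_A$ be the TSFT in which, at every node, the label of the \emph{left} child must be $A$-admissible relative to the node's label while the label of the \emph{right} child is free; its forbidden set is the finite family of $A$-inadmissible parent/left-child pairs. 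With $\vec N_n=(N_n(a))_a$, the recursion becomes $\vec N_{n+1}=w_n\,A\vec N_n$ with $\vec N_0=\mathbf 1$, so $\vec N_n=\bigl(\prod_{k<n}w_k\bigr)A^n\mathbf 1$ and $w_{n+1}/w_n^{2}=\bigl(\mathbf 1^{\mathsf T}A^{n+1}\mathbf 1\bigr)\big/\bigl(\mathbf 1^{\mathsf T}A^{n}\mathbf 1\bigr)\to\rho(A)$ (Ces\`aro). Hence $h(X_A)=\ln\rho(A)=\tfrac1p\ln\mu$. (When $\mu$ is an integer the full $\mu$-shift on the binary tree already works.)

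The inclusion $\subseteq$ is the substantive direction — this is where Lind's theorem gets extended — and requires analysing the recursion directly. Splitting $X$ into transitive components reduces to the transitive case (the entropy being the maximum over components). The plan then has three steps. \emph{(i)} Show that the normalised profile $\pi_n=\vec N_n/w_n$, which evolves under a fixed rational self-map $F$ of the probability simplex induced by the recursion, is eventually periodic in the data that governs the entropy: the support of $\pi_n$ stabilises to a set $\mathcal A'$ of ``active'' symbols and the orbit converges to a periodic cycle $\pi^{(0)},\dots,\pi^{(p-1)}$ of $F$, of some period $p$. \emph{(ii)} On $\mathcal A'$, telescope the recursion over a full period: to leading order $N_{n+1}(a)$ is $\sum_{(b,c)\in S_a,\ b,c\in\mathcal A'}N_n(b)N_n(c)$ up to lower-order terms, and iterating $p$ times turns this into a non-negative \emph{integral} linear relation whose spectral quantity is $e^{p\,h(X)}=\prod_{j=0}^{p-1}\Phi(\pi^{(j)})$, where $\Phi(\pi)=\sum_a\sum_{(b,c)\in S_a}\pi(b)\pi(c)$; equivalently one exhibits a non-negative integer matrix $B$, assembled from the active length-$p$ transition chains counted with multiplicity, with $\rho(B)=e^{p\,h(X)}$. \emph{(iii)} Run the Perron--Frobenius argument of \cite{Lind-ETDS1984} verbatim: $\rho(B)$ is a weak Perron number, so $\rho(B)^{q}\in\mathcal P$ for the period $q$ of the leading irreducible component of $B$, and therefore $h(X)=\tfrac{1}{pq}\ln\!\bigl(\rho(B)^{q}\bigr)\in\mathcal E$.

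I expect step \emph{(i)} to be the principal obstacle. The map $F$ has quadratic numerators and is merely piecewise defined — its qualitative form changes whenever coordinates of $\pi_n$ approach $0$ — so neither Perron--Frobenius nor Birkhoff's contraction theorem applies directly; one has to interleave a semicontraction or monotonicity estimate for $F$ in a suitable metric (Hilbert's projective metric on the face currently carrying the mass seems natural) with a finite combinatorial control of the support, exploiting that the support can only shrink until the configuration becomes recurrent. Promoting the ``leading order'' heuristic of step \emph{(ii)} to a rigorous statement that $e^{p\,h(X)}$ is the spectral radius of an honest \emph{integer} matrix — not merely an algebraic number — is the other delicate point; by contrast, the reductions to the nearest-neighbour and transitive cases and the arithmetic of weak Perron numbers are routine.
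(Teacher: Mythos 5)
Your argument is built on a misreading of what $h(X)$ is in this setting, and this breaks the substantive parts of both inclusions. The entropy here is the doubly logarithmic growth rate $h(X)=\lim_n \frac{\ln\ln |B_n(X)|}{n}$ (see \eqref{26}), i.e.\ the base $\kappa$ in $|B_n(X)|\approx\exp(\alpha\kappa^n)$; it is \emph{not} read off from the ratios $w_{n+1}/w_n^{2}$. Indeed, if $w_{n+1}/w_n^{2}$ stays bounded away from $0$ and $\infty$, then $\ln w_{n+1}=2\ln w_n+O(1)$, so $\ln w_n\asymp 2^{n}$ and $h(X)=\ln 2$ no matter what the limit of the ratio is. This kills your realization step: in your tree-shift $X_A$ (left child constrained by $A$, right child free) one has $\vec N_n=\bigl(\prod_{k<n}w_k\bigr)A^{n}\vec N_0$, hence $\ln w_n = \sum_{k<n}\ln w_k + O(n)\asymp 2^{n}$ and $h(X_A)=\ln 2$ for \emph{every} nondegenerate $A$ — consistent with Proposition \ref{prop:essential-symbols-ln-d}, since in $X_A$ every symbol is essential — rather than $\ln\rho(A)$. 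The quantity your ratio limit captures is related to the constant $\alpha$ (the ``hidden entropy''), not to $h$. To push the entropy below $\ln d$ one must fill the unused child slots with a \emph{forced inessential} symbol whose block count stays equal to $1$; that is exactly the paper's construction, which pads $M$ to a $(k+1)\times(k+1)$ matrix $V$ with row sums $d$ using a symbol $a^{(k+1)}$ whose only continuation is itself, so that $b_n^{(i)}=\ln a_n^{(i)}$ obeys a linear recursion governed by $M$ and $h=\ln\rho(M)$.

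The same misidentification underlies your converse direction: the identity $e^{p\,h(X)}=\prod_j\Phi(\pi^{(j)})$ with $\Phi(\pi_n)=w_{n+1}/w_n^{2}$ is the wrong invariant, and even granting your framework, steps (i) and (ii) — convergence of the normalized profile to a periodic cycle of the rational simplex map, and the promotion of the leading-order relation to an honest integer matrix $B$ — are exactly the points you concede are unproven, so the inclusion $\subseteq$ is a programme rather than a proof. The paper's route avoids all of this: taking logarithms \emph{once}, the SNRE of Theorem \ref{Thm: 1} becomes $b_n=Mb_{n-1}+O(1)$ where $M$ is the integer weighted adjacency matrix of a reduced SNRE (its entries are the exponents of the dominating monomial, so integrality is automatic, no limiting profile needed); Theorem \ref{Thm: 4} then gives $h(X)=\max_E\ln\lambda_{M_E}$ over reduced SNREs $E$, and Lind's characterization of spectral radii of non-negative integral matrices (every such radius is a root $\lambda^{1/p}$ of a Perron number, and conversely) yields both inclusions. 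You should rebuild your proof around the correct entropy formula; the weak-Perron arithmetic you invoke at the end is fine, but everything feeding into it has to change.
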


One may ask whether there is some computation method for the entropy of TSFTs. The affirmative solution is given in Theorem \ref{Thm: 4}, which says that the entropy is equal to the maximal value of the spectral radius of the adjacency matrices induced from $\mathsf{X}_{\mathcal{F}}$, we refer the reader to Section 2.3 for more details of the (reduced) system nonlinear recursive equations and the corresponding adjacency matrix.

\begin{theorem}\label{Thm: 4}
Let $X$ be a TSFT and let $F$ be the corresponding system of nonlinear recursive equations
which is defined in \eqref{27}. Then 
\begin{equation}
h(X)=\max \{\ln \lambda _{M_{E}}: E\text{ is a reduced SNRE of }%
F\}\text{.}  \label{12}
\end{equation}
\end{theorem}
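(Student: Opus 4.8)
The plan is to prove the two inequalities $h(X)\le\max_E\ln\lambda_{M_E}$ and $h(X)\ge\ln\lambda_{M_E}$ for every reduced SNRE $E$ of $F$, which together give \eqref{12} (and, in particular, show that the maximum there is a maximum, not merely a supremum). Throughout, put $x^{(0)}=\mathbf{1}$ and $x^{(n+1)}=F(x^{(n)})$, so that $x^{(n)}_i$ is the number of admissible patterns on the height-$n$ subtree rooted at $i$; recall from Section~2 that each $F_i$ is a polynomial with nonnegative integer coefficients of degree at most the number $d$ of offspring, that $F$ is therefore monotone on the positive orthant, and that $h(X)$ is read off from the doubly-exponential growth of $\lVert x^{(n)}\rVert$, namely $\ln\ln\lVert x^{(n)}\rVert$ grows linearly in $n$ with slope $h(X)$. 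Recall also that a reduced SNRE $E$ comes from $F$ by the reduction of Section~2.3 --- delete the variables whose $F$-orbit stays bounded (absorbing their limiting values into the coefficients), retain a single monomial in each surviving equation, and pass to a strongly connected component --- so that $M_E$ is the matrix of exponents of these monomials, an irreducible nonnegative integer matrix, and $\lambda_{M_E}$ is its Perron value. For the lower bound, fix such an $E$. By construction the full update $F$ dominates the reduced monomial update $E$ up to a bounded multiplicative factor, and every variable appearing in $E$ is unbounded, so there are $N$ and an index $i_0$ of $M_E$ with $x^{(N)}_{i_0}$ large; monotonicity of $F$ then gives $x^{(N+m)}\ge E^{m}(x^{(N)})$ up to a harmless bounded factor. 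Taking logarithms turns the monomial system into the affine map $\ln E(y)=M_E\ln y+\ln c$ with $\ln c\ge 0$, so $\ln x^{(N+m)}\ge M_E^{m}\ln x^{(N)}$; since $\ln x^{(N)}$ is positive at $i_0$ and $M_E$ is irreducible, Perron--Frobenius gives $\lVert M_E^{m}\ln x^{(N)}\rVert\ge c_0\lambda_{M_E}^{m}$ for some $c_0>0$, whence $\ln\ln\lVert x^{(n)}\rVert$ grows at slope at least $\ln\lambda_{M_E}$, i.e.\ $h(X)\ge\ln\lambda_{M_E}$. Taking the maximum over $E$ gives one half of \eqref{12}.

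For the upper bound I would pass to the tropical picture. Writing $\ell^{(n)}=\ln x^{(n)}$ and using $F_i(x)\le N\cdot\max_{\alpha}c_{i\alpha}x^{\alpha}$ with $N$ a bound on the number of monomials, one obtains $\ell^{(n+1)}\le M^{(n)}\ell^{(n)}+K\mathbf{1}$, where $K$ is a fixed constant and the rows of $M^{(n)}$ are the exponent vectors of the monomials of the reduced equations that dominate at step $n$; thus $M^{(n)}$ ranges over the finite family of adjacency matrices attached to $F$. Composing, $\ell^{(n)}\le K\sum_{k<n}M^{(n-1)}\cdots M^{(k+1)}\mathbf{1}$, so it remains to bound the growth of these matrix products. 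The key point is that the sequence $(M^{(n)})$ is not arbitrary: it is generated by iterating a fixed monotone convex piecewise-linear (tropical) self-map, so by a pigeonhole / eventual-periodicity argument the sequence of dominant-monomial choices is eventually periodic, say with period $q$; on the resulting periodic orbit $\ell^{\ast}$ is, up to the bounded correction, a Perron vector of the per-period product $M^{(n_0+q-1)}\cdots M^{(n_0)}$, and this product is conjugate to the adjacency matrix $M_{E^{\ast}}$ of a single reduced SNRE $E^{\ast}$ of $F$ with growth slope exactly $\ln\lambda_{M_{E^{\ast}}}$. This gives $h(X)\le\max_E\ln\lambda_{M_E}$, and running the same argument along the actual orbit $x^{(n)}$ exhibits the reduced SNRE $E^{\ast}$ for which $h(X)=\ln\lambda_{M_{E^{\ast}}}$.

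The main obstacle is exactly the control of the products $M^{(n-1)}\cdots M^{(k+1)}$: their joint growth rate can a priori exceed $\max_E\lambda_{M_E}$, so the argument must exploit that $(M^{(n)})$ is the deterministic orbit of the tropical map. Establishing the eventual periodicity of the dominant-monomial sequence for this class of convex monotone maps, and checking that a per-period product realized by such a periodic orbit is conjugate to a single reduced SNRE of $F$ --- so that passing to the $q$-fold iterate of $F$ and back creates no Perron values outside $\{\lambda_{M_E}\}$ and the strongly connected structure is respected --- is where the real work lies; the monotonicity comparison and the Perron--Frobenius estimates used above are routine. Finally, once \eqref{12} is established, combining it with the fact that the Perron value of an irreducible nonnegative integer matrix of period $p$ is the $p$-th root of a Perron number recovers the inclusion ``$\subseteq$'' in Theorem~\ref{Thm: 2}.
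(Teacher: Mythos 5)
Your lower-bound half is essentially the paper's: for any reduced SNRE $E$ one has $E^{(i)}\leq F^{(i)}$ termwise with the same initial data, so $h(X)=h(F)\geq h(E)=\ln\lambda_{M_E}$ by the reduced-SNRE entropy formula (Theorem \ref{Thm: 3}, with Remark \ref{Rk: 1} to dispose of inessential symbols). Note, however, that you quietly change the definition of a reduced SNRE: in Definition \ref{Def: 1} a reduction keeps all $k$ variables and the original initial condition and merely selects one monomial per row; there is no deletion of bounded variables and no passage to a strongly connected component, so $M_E$ need not be irreducible and your Perron--Frobenius estimate at a single index $i_0$ should be replaced by the argument of Theorem \ref{Thm: 3}.

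The genuine gap is in the upper bound, and you name it yourself: after tropicalizing you get $\ell^{(n+1)}\leq M^{(n)}\ell^{(n)}+K\mathbf{1}$ with a \emph{time-varying} dominant-monomial matrix $M^{(n)}$, and you must bound the inhomogeneous products $M^{(n-1)}\cdots M^{(k+1)}$, whose growth rate (a joint-spectral-radius--type quantity) can a priori exceed $\max_E\lambda_{M_E}$. Your proposed repair --- eventual periodicity of the argmax sequence, plus the claim that the per-period product is conjugate to $M_{E^\ast}$ for a single reduced SNRE $E^\ast$ --- is exactly the unproved part, and the second claim is dubious as stated: a product of $q$ distinct row-selection matrices is in general not conjugate to, nor spectrally dominated by the $q$-th power of, any single selection matrix, so even granting periodicity you have not excluded a growth rate outside $\{\lambda_{M_E}\}$. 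The paper does not confront this at all: using that the limit defining $h(F)$ exists, it extracts a subsequence along which the size-ordering of $a_n^{(1)},\ldots,a_n^{(k)}$ is fixed, so that one and the same monomial $F_1^{(i)}$ dominates $F^{(i)}$ with ratios $c_n^{(i)}$ bounded between constants; then $b_n=M_{E}b_{n-1}+\ln c_{n-1}$ for a single fixed reduced SNRE $E$, and elementary norm estimates give $h(F)=\ln\lambda_{M_E}\leq h$, which together with the monotonicity half yields \eqref{12} (and shows the maximum is attained). To complete your route you would have to either revert to this fixed-dominant-monomial argument or actually prove a statement of the form: for a finite family of nonnegative matrices generated by independent row choices, the maximal growth rate of arbitrary products equals the maximum spectral radius over stationary (single-matrix) choices. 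As written, that key step is missing.
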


Roughly speaking, the entropy of $X$ is attained on the entropy of some subsystems of itself. This makes the differences between the classical one-dimensional SFTs and TSFTs. For the convenience of the reader, we give a table for the computation method and characterization of the entropy. Let $\Omega$ be a $1$-dimensional SFT with the adjacency matrix $M$ and let $\lambda$ be its spectral radius.

\begin{center}
\begin{tabular}[t]{l|ccc}
\rule[-1ex]{0pt}{2.5ex} Entropy $\backslash$ dimension & $1$-d SFT & TSFT & $r$-d SFT \\ 
\hline
\rule[-1ex]{0pt}{3.5ex} Formula & $\ln \lambda _{M}$ & Theorem \ref{Thm: 4} & None \\
\rule[-1ex]{0pt}{2.5ex} Algebraic criterion & Perrons & Theorem \ref{Thm: 2} & None \\
\rule[-1ex]{0pt}{2.5ex} Computational criterion & Computable & Computable & RRE
\end{tabular}
\end{center}

From the table one can see that for the algebraic characterization of the entropy for multidimensional SFT is still lacking since there is no general method for the entropy computation. Our method herein sheds some new light on it due to the fact that TSFT is an intermediate class between $1$-d and $r$-d SFTs. After revealing the entropy computation algorithm, we use it to investigate which TSFT is entropy minimal.

It is known that an irreducible $\mathbb{Z}$ SFT is entropy minimal; that is, any proper subshift $Y \subset X$ has smaller entropy than that of an irreducible SFT $X$. For $r \geq 2$, every $\mathbb{Z}^r$ SFT having the mixing property called \emph{uniform filling property} is entropy minimal while there is a non-trivial \emph{block gluing} $\mathbb{Z}^r$ SFT which is not entropy minimal. Readers are referred to \cite{BPS-TAMS2010,LM-1995,QS-ETDS2003} for more details. Recently, it is demonstrated that the dimension minimality of self-affine sets holds for a generic choice in arbitrary dimension. More specifically, let $E_{\mathbf{A, v}} = \bigcup\limits_{i=1}^K A_i(E_{\mathbf{A, v}}) + v_i$ be a self-affine set corresponding to $\mathbf{A} = (A_1, \ldots, A_K) \in GL_r(\mathbb{R})^K$ and $\mathbf{v} = (v_1, \ldots, v_K) \in (\mathbb{R}^r)^K$ with $\| A_i \| < 1$ for all $i$. A folklore conjecture asserts that $\dim_H E_{\mathbf{A', v'}} < \dim_H E_{\mathbf{A, v}}$, where $\mathbf{A}'= (A_1, \ldots, A_{K-1})$ and $\mathbf{v}'= (v_1, \ldots, v_{K-1})$. There exist simple counter- examples showing that this cannot be the case for all self-affine sets; however, the conjecture holds in arbitrary dimension for a generic choice of the matrix tuple. See \cite{Fal-MPCPS1988,FM-F2007,FK-DCDS2011,KL-2017,KM-2016} and the references therein.

Suppose that $\mathsf{X}_{\mathcal{F}}$ is a Markov tree-shift (defined later) with $\mathcal{F} = \{u_1, \ldots, u_K\}$ for some $K \in \mathbb{N}$. Proposition \ref{prop:saving-symbol-entropy-minimality} reveals the necessary and sufficient condition for $\mathsf{X}_{\mathcal{F}}$ being entropy minimal. More precisely, $h(\mathsf{X}_{\mathcal{F}'}) < h(\mathsf{X}_{\mathcal{F}})$, where $\mathcal{F}' = \{u_1, \ldots, u_{K-1}\}$.

The rest of this paper is structured as follows. In Section 2, we set up the notation and terminology of the TSFTs; previous results in the computation of the entropy of TSFTs which are useful for the proof of Theorem \ref{Thm: 2} are also presented therein. Section 2.4 applies Theorem \ref{Thm: 4} to investigate some restricted entropy minimality problem and reveals the necessary and sufficient condition. The proofs of Theorem \ref{Thm: 2} and Theorem \ref{Thm: 4} are presented in Section 3.

\section{Definitions and Previous results}

This section collects some basic definitions of symbolic dynamics on Cayley trees.

\subsection{Basic definitions}

Let $\Sigma =\{0,1,\ldots ,d-1\}$ and let $\Sigma ^{\ast }=\bigcup_{n\geq 0}\Sigma ^{n}$ be the union of finite words over $\Sigma $, where $\Sigma^{n}=\{w_{1}w_{2}\cdots w_{n}:w_{i}\in \Sigma \text{ for }1\leq i\leq n\}$ is the collection of words of length $n$ for $n\in \mathbb{N}$ and $\Sigma^{0}=\{\epsilon \}$ consists of the empty word $\epsilon $. An \emph{infinite tree} $t$ over a finite alphabet $\mathcal{A}$ is a function from $\Sigma ^{\ast }$ to $\mathcal{A}$. Denote by a \emph{node of an infinite tree} a word of $\Sigma ^{\ast }$ and the empty word relates to the root of the tree. Suppose $x$ is a node of a tree. $x$ has children $xi$ with $i\in \Sigma $. A sequence of words $(x_{k})_{1\leq k\leq n}$ is called a \emph{path} if, for all $k\leq n-1$, $x_{k+1}=x_{k}i_{k}$ for some $i_{k}\in \Sigma $. Suppose $t$ is a tree and let $x$ be a node, we refer $t_{x}$ to $t(x)$ for simplicity. A subset of words $L\subset \Sigma ^{\ast }$ is called \emph{prefix-closed} if each prefix of $L$ belongs to $L$. A function $u$ defined on a finite prefix-closed subset $L$ with codomain $\mathcal{A}$ is called a \emph{pattern}, and $L$ is called the \emph{support} of the pattern. A subtree of a tree $t$ rooted at a node $x$ is the tree $t^{\prime }$ satisfying $t_{y}^{\prime }=t_{xy}$ for all $y\in \Sigma^{\ast }$ such that $xy$ is a node of $t$, where $xy=x_{1}\cdots x_{m}y_{1}\cdots y_{n}$ means the concatenation of $x=x_{1}\cdots x_{m}$ and $y = y_{1}\cdots y_{n}$.

Suppose $n\in \mathbb{N} \cup \{0\}$, $\Sigma
_{n}=\bigcup_{k=0}^{n}\Sigma ^{k}$ denotes the set of words of length at most 
$n$. We say that a pattern $u$ is \emph{a block of height $n$} (or \emph{$n$%
-block}) if the support of $u$ is $\Sigma _{n-1}$, denoted by $\mathrm{height%
}(u)=n$. Furthermore, $u$ is a pattern of a tree $t$ if there exists $x\in
\Sigma ^{\ast }$ such that $u_{y}=t_{xy}$ for every node $y$ of $u$, and say
that $u$ is a pattern of $t$ rooted at the node $x$ in this case. A tree $t$
is said to \emph{avoid} $u$ if $u$ is not a pattern of $t$. If $u$ is a
pattern of $t$, then $u$ is called an \emph{allowed pattern} of $t$.

Denote by $\mathcal{T}$ the set of all infinite trees over $\mathcal{A}$. For $%
i\in \Sigma $, the shift transformations $\sigma _{i}$ from $\mathcal{T}$ to
itself are defined as follows. For every tree $t\in \mathcal{T}$, $\sigma
_{i}(t)$ is the tree rooted at the $i$th child of $t$, that is, $(\sigma_{i}(t))_{x}=t_{ix}$ for all $x\in \Sigma ^{\ast }$. For the simplification
of the notation, we omit the parentheses and denote $\sigma _{i}(t)$ by $%
\sigma _{i}t$. The set $\mathcal{T}$ equipped with the shift transformations 
$\sigma _{i}$ is called the \emph{full tree-shift} of infinite trees over $%
\mathcal{A}$. Suppose $w=w_{1}\cdots w_{n}\in \Sigma ^{\ast }$. Define $%
\sigma _{w}=\sigma _{w_{n}}\circ \sigma _{w_{n-1}}\circ \cdots \circ \sigma
_{w_{1}}$. It follows immediately that $(\sigma _{w}t)_{x}=t_{wx}$ for all $%
x\in \Sigma ^{\ast }$.

Given a collection of patterns $\mathcal{F}$, let $\mathsf{X}_{\mathcal{F}}$ denote
the set of all trees avoiding any element of $\mathcal{F}$. A subset $%
X\subseteq \mathcal{T}$ is called a \emph{tree-shift} if $X=\mathsf{X}_{\mathcal{F}}$
for some $\mathcal{F}$. We say that $\mathcal{F}$ is \emph{a set of
forbidden patterns} of $X$. A tree-shift $X=\mathsf{X}_{\mathcal{F}}$ is called a 
\emph{tree-shift of finite type} if the forbidden set $\mathcal{F}$ is
finite. Denote by $B_{n}(X)$ the set of all blocks of height $n$ of $X$, and 
$B(X)$ the set of all blocks of $X$. Suppose $u\in B_{n}(X)$ for some $n\geq
2$. Let $\sigma _{i}u$ be the block of height $n-1$ such that $(\sigma
_{i}u)_{x}=u_{ix}$ for $x\in \Sigma _{n-2}$. The block $u$ is written as $%
u=(u_{\epsilon },\sigma _{0}u,\sigma _{1}u, \ldots, \sigma_{d-1} u)$.

Suppose $X$ and $Y$ are two one-dimensional shift spaces, the Curtis-Lyndon-Hedlund theorem (see \cite{Hed-MST1969}) indicates that a map $\phi: X \to Y$ is a sliding block code if and only if $\phi$ is continuous and $\phi \circ \sigma_x = \sigma_Y \circ \phi$. A similar discussion extends to tree-shifts; in other words, $\phi$ is a sliding block code (between tree-shifts) if and only if $\phi$ is continuous and commutes with all tree-shift maps $\sigma_i$ for $i \in \Sigma$.

If a sliding block code $\phi: X \to Y$, herein $X$ and $Y$ are tree-shifts, is onto, then $\phi$ is called a \emph{factor code} from $X$ to $Y$. A tree-shift $Y$ is a \emph{factor} of $X$ if there is a factor code from $X$ onto $Y$. If $\phi$ is one-to-one, then $\phi$ is called an \emph{embedding} of $X$ into $Y$. A sliding block code $\psi: Y \to X$ is called an \emph{inverse} of $\phi$ if $\psi(\phi(x)) = x$ for all $x \in X$ and $\phi(\psi(y)) = y$ for all $y \in Y$. In this case, we say that $\phi$ is \emph{invertible} and write $\psi = \phi^{-1}$.

\begin{definition}
A sliding block code $\phi: X \to Y$ is a \emph{conjugacy from $X$ to $Y$} if it is invertible. Two tree-shifts $X$ and $Y$ are called \emph{conjugate}, denoted by $X \cong Y$, if there is a conjugacy from $X$ to $Y$.
\end{definition}

A TSFT $X = \mathsf{X}_{\mathcal{F}}$ is called a \emph{Markov tree-shift} if the forbidden set $\mathcal{F}$ consists of $2$-blocks. In \cite{BC-2015}, Ban and Chang showed that every TSFT is conjugated to a Markov tree-shift. Therefore, it suffices to investigate Markov tree-shifts for characterizing the properties of TSFTs.


\subsection{Entropy}

Let $X$ be a tree-shift. The \emph{entropy} of $X$ is defined as follows.

\begin{definition}
\begin{enumerate}
\item The \emph{entropy} of $X$, denoted by $h(X)$, is defined as 
\begin{equation}
h(X)=\lim_{n\rightarrow \infty }\frac{\ln ^{2}|B_{n}(X)|}{n}  \label{26}
\end{equation}%
whenever the limit exists, where $|\cdot |$ stands for the cardinality of a
set and $\ln ^{2}=\ln \circ \ln $.

\item If $|B_{n}(X)|$ behaves like $\exp (\alpha \kappa ^{n})$%
, such as $|B_{n}(X)|$ $\approx c\exp (\alpha \kappa ^{n})$
for instance, where $c$ is a constant, then the value $\alpha $ is called
the \emph{hidden entropy} (or \emph{sub-entropy}) of $X$.
\end{enumerate}
\end{definition}

This paper provides an algorithm for the computation of entropy, i.e., the value $\kappa $, and gives a complete characterization of such a value. One question still unanswered is whether the same results hold for the hidden entropy $\alpha $. This question is at present far from being solved, even for the simplest cases. Namely, the case where $(d,k)=(2,2)$, the description for the hidden entropy $\alpha $ of such $X$ is still lacking. However, the computation of the exact values of $|B_{n}(X)|$ relies on the both values $\alpha $ and $\kappa$.

We introduce the notion of \emph{system of nonlinear recursive equations} which is useful for the computation of the entropy.

\begin{definition}\label{Def: 2}
Let $\mathcal{A}=\{a^{(1)},a^{(2)},\ldots ,a^{(k)}\}$ be the
symbol set and suppose $\mathcal{A}^{d}$ is an ordered set with respect to
the lexicographic order, $d\in \mathbb{N}.$
\begin{enumerate}
\item Let $F=\sum_{\mathbf{a}\in \mathcal{A}^{d}}f_{\mathbf{a}}\mathbf{a}$
be a binary combination over $\mathcal{A}^{d}$, i.e., $f_{\mathbf{a}}\in
\{0,1\}$ for $\mathbf{a}\in \mathcal{A}^{d}$. The vector $v_{F}=(f_{\mathbf{a%
}})_{\mathbf{a}\in \mathcal{A}^{d}}\in \mathbb{R}^{k^{d}}$ is called the 
\emph{indicator vector} of $F$.
\item A sequence $\{a_{n}^{(1)},\ldots ,a_{n}^{(k)}\}_{n\in \mathbb{N}}$ is
defined by a \emph{system of nonlinear recursive equations (SNRE) }of degree 
$(d,k)$ if 
\begin{equation*}
a_{n}^{(i)}=F^{(i)} \text{ for }n\geq 2,\text{ }1\leq i\leq k\text{,}
\end{equation*}%
and $a_{1}^{(i)} \in \mathbb{N}$ is given for $1\leq i\leq k$, where $F^{(1)},\ldots
,F^{(k)} $ are binary combinations over $\{a_{n-1}^{(1)},a_{n-1}^{(2)},%
\ldots ,a_{n-1}^{(k)}\}^{d}$, respectively.
\item A symbol $a^{(i)}\in \mathcal{A}$ is called \emph{essential }if there
exists an $m\in \mathbb{N}$ such that $a_{m}^{(i)}\geq 2$; otherwise, $a^{(i)}=1$ is called \emph{inessential}.
\item Suppose $F = \{F^{(i)}\}_{i=1}^k$ defines an SNRE. The \emph{indicator matrix} $I_F \in \mathbf{M}_{k \times k^d}$ of $F$ is defined as
\begin{equation}
I_F=(v_{F^{(i)}})_{i=1}^{k}.  \label{3}
\end{equation}
\end{enumerate}
\end{definition}

It is remarkable that an SNRE defined by $F$ induces a unique indicator matrix $I_{F}$, and vice versa (up to permutation). Furthermore, each $F^{(i)}$ is seen as an ordered binary combination. For example, consider the symbol set $\mathcal{A}=\{a^{(1)},a^{(2)}\}$ and the following SNRE.
\begin{equation}
\left\{ 
\begin{array}{l}
a_{n}^{(1)} =F^{(1)}=\left( a_{n-1}^{(1)}\right) ^{2}+\left(a_{n-1}^{(2)}\right) ^{2}, \\ 
a_{n}^{(2)} =F^{(2)}= a_{n-1}^{(1)} a_{n-1}^{(2)} + a_{n-1}^{(2)} a_{n-1}^{(1)}, \\ 
a_{1}^{(1)} =a_{1}^{(2)}=2.%
\end{array}%
\right.  \label{14}
\end{equation}
Then the corresponding indicator matrix is
\begin{equation*}
I_{F}= \begin{pmatrix}
1 & 0 & 0 & 1 \\ 
0 & 1 & 1 & 0
\end{pmatrix}.
\end{equation*}

For the rest of this paper, we simply use $F$ to describe the SNRE of $\{a_{n}^{(1)},\ldots ,a_{n}^{(k)}\}_{n=1}^{\infty}$. Suppose $X = \mathsf{X}_{\mathcal{F}}$ is a TSFT over $\mathcal{A} = \{a^{(1)}, a^{(2)}, \ldots, a^{(k)}\}$. Let
$$
X_{a^{(i)}} = \{t \in X: t_{\epsilon} = a^{(i)}\}
$$
be the set of those trees whose roots are assigned with the symbol $a^{(i)}$, and let $a^{(i)}_n = |B_n(X_{a^{(i)}})|$, where $1 \leq i \leq k$. Theorem \ref{Thm: 1} follows immediately.

\begin{theorem}[See \cite{BC-2015a}]\label{Thm: 1}
The values $\{a_{n}^{(1)},\ldots ,a_{n}^{(k)}\}_{n=1}^{\infty}$ satisfies the following SNRE.%
\begin{equation}
\left\{ 
\begin{array}{l}
a_{n}^{(i)}=\sum\limits_{(a^{(i)},i_{1},i_{2},\cdots ,i_{d}) \notin \mathcal{F}} \prod_{j=1}^{d} a_{n-1}^{(i_{j})}, \quad 1\leq i\leq k, n \geq 2, \\ 
a_{1}^{(i)}=\left\vert B_2(X_{a^{(i)}})\right\vert, \quad 1\leq i\leq k.%
\end{array}%
\right.  \label{27}
\end{equation}
\end{theorem}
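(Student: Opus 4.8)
The plan is to derive the recursion from the canonical ``peel off the root'' decomposition of a height-$n$ block, and to reduce the whole statement to a single local-to-global claim that is made possible by the Markov structure. Fix $n \geq 2$ and $1\leq i\leq k$. By definition every $u \in B_n(X_{a^{(i)}})$ has the form $u=(u_\epsilon,\sigma_0 u,\sigma_1 u,\ldots,\sigma_{d-1}u)$ with $u_\epsilon=a^{(i)}$ and each $\sigma_l u$ a pattern with support $\Sigma_{n-2}$, i.e.\ a block of height $n-1$; this correspondence is manifestly a bijection between \emph{arbitrary} height-$n$ patterns rooted at $a^{(i)}$ and tuples $(a^{(i)};v^{(0)},\ldots,v^{(d-1)})$ consisting of the root symbol together with $d$ height-$(n-1)$ patterns. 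The problem is thus to identify which such tuples actually occur in $X$.

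The crux is the following claim: $u \in B_n(X)$ if and only if (i) the $2$-block $(u_\epsilon,(\sigma_0 u)_\epsilon,\ldots,(\sigma_{d-1}u)_\epsilon)$ does not belong to $\mathcal{F}$, and (ii) $\sigma_l u \in B_{n-1}(X)$ for every $l\in\Sigma$. The forward implication is immediate: if $u$ occurs in some $t\in X$ rooted at a node $x$, then $t$ avoids $\mathcal{F}$, so the $2$-block of $t$ at $x$ — which is exactly the $2$-block in (i) — is not forbidden, and the height-$(n-1)$ subtree of $t$ rooted at $xl$ witnesses $\sigma_l u\in B_{n-1}(X)$. The reverse implication is where the finite-type hypothesis is used. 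Given (i) and (ii), for each $l$ choose $t^{(l)}\in X$ in which $\sigma_l u$ occurs at the root; such a tree exists because tree-shifts are invariant under the $\sigma_i$ (any pattern of a subtree is a pattern of the ambient tree), so one simply shifts a witnessing tree down to the occurrence node. Now build $t$ by declaring $t_\epsilon=a^{(i)}$ and grafting $t^{(l)}$ as the subtree at the $l$th child. Every $2$-block occurring in $t$ is either the root $2$-block, excluded from $\mathcal{F}$ by (i), or a $2$-block of some $t^{(l)}\in X$, hence not in $\mathcal{F}$; since $\mathcal{F}$ consists only of $2$-blocks, $t$ avoids $\mathcal{F}$, i.e.\ $t\in X$, and $u$ occurs in $t$ at the root, so $u\in B_n(X_{a^{(i)}})$.

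Combining the claim with the bijection above, and using once more that $\sigma_i$-invariance makes $B_m(X_{a^{(j)}})$ coincide with $\{v\in B_m(X):v_\epsilon=a^{(j)}\}$, the map $u\mapsto(\sigma_0 u,\ldots,\sigma_{d-1}u)$ restricts to a bijection
$$B_n(X_{a^{(i)}})\;\longleftrightarrow\;\bigsqcup_{(a^{(i)},i_1,\ldots,i_d)\notin\mathcal{F}} B_{n-1}(X_{a^{(i_1)}})\times\cdots\times B_{n-1}(X_{a^{(i_d)}}).$$
Taking cardinalities yields $a_n^{(i)}=\sum_{(a^{(i)},i_1,\ldots,i_d)\notin\mathcal{F}}\prod_{j=1}^{d}a_{n-1}^{(i_j)}$ for $n\geq2$, which is the recursive part of \eqref{27}. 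For the initial condition, a height-$2$ block rooted at $a^{(i)}$ is by the same decomposition nothing but a filling $(a^{(i)},i_1,\ldots,i_d)$, and the claim (at $n=2$) says it lies in $B_2(X)$ exactly when it is not in $\mathcal{F}$; after the harmless preliminary reduction to the case where every symbol occurs at the root of some tree in $X$ (discarding the others changes neither side), this records $a_1^{(i)}=|B_2(X_{a^{(i)}})|$.

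I expect the main obstacle to be the reverse direction of the claim — the gluing step — since it is precisely there that one must be certain that assembling locally admissible pieces produces a globally admissible tree; for a general (non-Markov) forbidden set the root $2$-block being legal would not preclude a longer forbidden pattern straddling the root, and this is why one works with a Markov tree-shift (available up to conjugacy by the result of Ban and Chang quoted earlier). A secondary, purely bookkeeping point is the identification of $B_m(X_{a^{(j)}})$ with the height-$m$ blocks of $X$ rooted at $a^{(j)}$, which relies on shift-invariance of tree-shifts and on the mild essentiality assumption that no symbol is ``dead.''
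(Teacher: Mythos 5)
Your proof is correct, and it is the natural argument: the root decomposition $u=(u_\epsilon,\sigma_0u,\ldots,\sigma_{d-1}u)$, the local-to-global claim, and the Markov gluing step (every $2$-block of the grafted tree is either the root $2$-block or lies inside one of the shifted witnesses) are exactly what makes \eqref{27} hold. Note that the paper itself offers no proof to compare against --- it states the result as quoted from \cite{BC-2015a} with the remark that it ``follows immediately'' from the definition of $a_n^{(i)}$ --- so there is no divergence of method to report. Two small observations: your identification of $B_m(X_{a^{(j)}})$ with $\{v\in B_m(X):v_\epsilon=a^{(j)}\}$ (equivalently, with the height-$m$ initial blocks of trees rooted at $a^{(j)}$) is the reading the theorem requires, and your gluing claim in fact makes the ``no dead symbol'' reduction unnecessary, since for a symbol occurring at the root of no tree both sides of the recursion vanish automatically. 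Finally, the paper's initial condition $a_1^{(i)}=|B_2(X_{a^{(i)}})|$ carries an index shift relative to the definition $a_n^{(i)}=|B_n(X_{a^{(i)}})|$ (it is the $n=2$ instance of your claim relabeled as the first term); your treatment of it is as faithful as the statement permits, and the shift is immaterial for the entropy.
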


Notably, the initial condition $a_{1}^{(i)}=\left\vert B_2(X_{a^{(i)}})\right\vert$ in \eqref{27} is the number of items of $F^{(i)}$ while, generally, the initial condition of an SNRE can be arbitrary. Define the \emph{entropy}, say $h(F)$, for an SNRE $F=\{F^{(i)}\}_{i=1}^{k}$ as
\begin{equation}
h(F)=\lim_{n\rightarrow \infty }\frac{\ln ^{2}\sum_{i=1}^{k}a_{n}^{(i)}}{n}%
\text{.}  \label{4}
\end{equation}%
Theorem \ref{Thm: 5} indicates that, for any TSFT $X$, there exists an SNRE $F$ such that $h(X)=h(F)$.

\begin{theorem}[See \cite{BC-2015a}]\label{Thm: 5}
The entropy of a tree-shift of finite type is realized as a system of nonlinear recurrence equations of degree $(d,k)$ for some $d,k\geq 2$. Conversely, every system of nonlinear recurrence equations of degree $(d,k)$ is corresponding to the entropy of some tree-shifts of finite type.
\end{theorem}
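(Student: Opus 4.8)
The plan is to establish the two directions separately. The forward direction --- the entropy of any TSFT equals $h(F)$ for a suitable SNRE --- is essentially bookkeeping once we recall two facts: every TSFT is conjugate to a \emph{Markov tree-shift} (the cited result of Ban and Chang) and entropy is a conjugacy invariant. The backward direction --- every SNRE $F$ is $h(X)$ for some TSFT $X$ --- is where the work lies, and the obstruction is that the SNRE which Theorem \ref{Thm: 1} attaches to a TSFT necessarily has its initial tuple equal to ``the number of terms of each $F^{(i)}$'', which is not an arbitrary element of $\mathbb{N}^k$.

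For the forward direction, given a TSFT $X$ (which after adjoining dummy symbols and forced children we may take on a $d$-ary tree with $d,k\ge 2$), pass to a conjugate Markov tree-shift $X'$. That $h(X)=h(X')$ is the usual sliding-block-code estimate: a code of memory $m$ carries a block of height $n+m$ onto one of height $n$, so $|B_n(X')|\le |B_{n+m}(X)|$ and, using the inverse code, the reverse inequality holds after another shift; hence $\ln^2|B_n(X)|/n$ and $\ln^2|B_n(X')|/n$ share the same limit. For the Markov shift, Theorem \ref{Thm: 1} gives an SNRE $F$ of degree $(d,k)$ whose sequences $\{a_n^{(i)}\}$ count the height-$n$ blocks of $X'$ rooted at $a^{(i)}$; since a block of $X'$ is a root symbol together with the $d$ subtree-blocks below it, $\sum_i a_n^{(i)}=|B_n(X')|$ up to a harmless shift of $n$, and then \eqref{4} and \eqref{26} give $h(F)=h(X')=h(X)$.

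For the backward direction, starting from an SNRE $F=\{F^{(i)}\}_{i=1}^k$ of degree $(d,k)$ with initial tuple $(a_1^{(i)})$, take $X$ to be the Markov tree-shift on the $d$-ary tree over $\mathcal{A}=\{a^{(1)},\dots,a^{(k)}\}$ whose forbidden set is exactly $\{(a^{(i)},i_1,\dots,i_d):a^{(i_1)}\cdots a^{(i_d)}\text{ is not a term of }F^{(i)}\}$. By Theorem \ref{Thm: 1} the block-counts $b_n^{(i)}=|B_{n+1}(X_{a^{(i)}})|$ satisfy the \emph{same} recursion $\{F^{(i)}\}$, but with $b_1^{(i)}=$ (number of terms of $F^{(i)}$) in place of $a_1^{(i)}$. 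The heart of the proof is therefore a lemma: \emph{the entropy of an SNRE with a fixed recursion depends only on which symbols are essential, not on the precise positive initial values}. I would prove this by monotonicity plus scaling --- coordinatewise domination of the initial tuples propagates through the recursion because all coefficients are nonnegative, so it suffices to compare an arbitrary tuple with integer multiples of a fixed one having the same set of $\ge 2$ entries; multiplying the $\ge 2$ entries by $c\ge 1$ multiplies $a_n^{(i)}$ by a factor whose logarithm is comparable to $\ln a_n^{(i)}$ itself, which disappears after one more logarithm and division by $n$ (bounded factors, and polynomial-in-the-exponent factors, are killed the same way). This is closely related to why Theorem \ref{Thm: 4} must take a \emph{maximum} over reduced SNREs.

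It then remains to match the essential sets. If $F^{(i)}$ has at least two terms then $b_2^{(i)}\ge 2$, so $a^{(i)}$ is essential for $X$ exactly when it is for $F$; the only discrepancy is a symbol essential for $F$ solely because $a_1^{(i)}\ge 2$ while $F^{(i)}$ has at most one term. For each such $a^{(i)}$ I would enlarge the construction by one new inessential symbol $z$ with $F^{(z)}=z\cdots z$ (so $b_n^{(z)}\equiv 1$) and append the term $z\cdots z$ to $F^{(i)}$; this forces $b_2^{(i)}\ge 2$ while only adding the constant $1$ to an already super-exponentially growing sequence, so by the lemma it does not move the entropy, and one checks --- using that in $F$ any symbol one of whose terms contains an essential symbol is itself essential --- that no extra symbol becomes essential. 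Thus the SNRE of $X$ has the same recursion and the same essential set as $F$, so $h(X)=h(F)$; the degenerate case $h(F)=0$ (no essential symbols) is realized by a single-point TSFT. I expect the lemma, together with this essential-set bookkeeping, to be the only non-formal ingredient: making precise that doubly-exponential growth renders the entropy blind to the initial conditions is the crux, everything else being the definitions and Theorem \ref{Thm: 1}.
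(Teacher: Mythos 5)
This paper does not actually prove Theorem \ref{Thm: 5}: it is quoted from \cite{BC-2015a}, so there is no in-paper argument to compare yours against line by line. Judged on its own, your architecture is the natural one: conjugacy to a Markov tree-shift plus Theorem \ref{Thm: 1} for the forward direction, and, for the converse, the Markov tree-shift whose allowed $2$-blocks are exactly the monomials of $F$, a dummy symbol to absorb the mismatch between the given initial tuple and the canonical one $a_1^{(i)}=$ (number of terms of $F^{(i)}$), and a lemma saying $h(F)$ is insensitive to the initial data. You correctly identify the initial-condition mismatch as the crux, and your scaling observation is sound: if only entries $\ge 2$ are multiplied by $c$, each occurrence of a scaled variable in a monomial of the $n$-th iterate contributes at most $\ln c$ while already contributing at least $\ln 2$, so $\ln a_n^{(i)}$ changes by a bounded factor only, which vanishes under $\ln\circ\ln$ and division by $n$.

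Two steps are genuinely underproved. First, your lemma is stated for tuples with the same \emph{essential} set, but your proof sketch only treats tuples with the same set of entries $\ge 2$ \emph{at time $1$}, and in your application the two tuples to be compared generally have different time-$1$ supports even after the dummy-symbol modification (a multi-term symbol may carry initial value $1$ in the given SNRE, while the canonical tuple assigns it a value $\ge 2$). Since a symbol can be essential only transiently --- equal to $2$ at one time and identically $1$ afterwards --- a bridging argument is needed; for instance, one can show that the sets $S_n=\{i: a_n^{(i)}\ge 2\}$ eventually coincide with the common essential set for both initial tuples (for the canonical tuple the $S_n$ increase, and they sandwich the other orbit), and then run your monotonicity-plus-scaling comparison after a finite time shift, which the entropy ignores. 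Second, the assertion that appending the term $z\cdots z$ to $F^{(i)}$ ``does not move the entropy by the lemma'' does not follow from the lemma, which compares initial conditions for a \emph{fixed} recursion; here the recursion itself changes. The naive bound $F^{(i)}+1\le 2F^{(i)}$ compounds to a multiplicative error $c_n$ with $\ln c_n$ of order $d^{\,n}$, which is not negligible when $h(F)<\ln d$, and the scaling trick is unavailable because the appended $1$ is not attached to a variable of value $\ge 2$. The claim is true, but proving it requires an analysis in the spirit of the proof of Theorem \ref{Thm: 4} ($b_n=Mb_{n-1}+$ bounded corrections, restricted to essential symbols), not an appeal to the lemma. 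A smaller slip: the discrepancy between the essential sets is not only ``symbols with $a_1^{(i)}\ge 2$ and at most one term''; symbols that reach such a symbol through a chain of monomials are affected as well, although your modification happens to repair those automatically.
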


Let $A$ and $B\in \mathbf{M}_{m\times n}(\mathbb{Z})$. We say that $A\leq B$ if 
$A(i,j)\leq B(i,j)$ for $1\leq i\leq m$ and $1\leq j\leq n$. Define the
reduced SNRE as follows.

\subsection{Reduced SNRE}

This subsection introduces the notion of \emph{reduced SNRE} which enables us to build up a computational method for the entropy of a TSFT (Theorem \ref{Thm: 1}). Let us rewrite the SNRE \eqref{27} in the following form. 
\begin{equation}
\left\{ 
\begin{array}{l}
a_{n}^{(i)}=F^{(i)}=\sum_{j=1}^{k^{d}}\alpha _{j}^{(i)}F_{j}^{(i)}; \\ 
a_{1}^{(i)}=\sum_{j=1}^{k^{d}}\alpha _{j}^{(i)}\text{, }1\leq i\leq k.%
\end{array}%
\right.  \label{1}
\end{equation}


\begin{definition}[Reduced SNRE]\label{Def: 1}
Suppose $X$ is a TSFT. Let $F$ be the SNRE according to Theorem \ref{Thm: 1} and let $I_F \in \mathbf{M}_{k\times k^{d}}$ be its indicator matrix. We call $E$ a \emph{reduced SNRE} of $F$ if $E$ is the SNRE defined by some indicator matrix $I_{E}$ which satisfies the following conditions.
\begin{itemize}
\item[(i)] $I_{E}\leq I_{F}$;
\item[(ii)] $I_{E}$ has exactly one $1^{\prime }$s in each row;
\item[(iii)] the initial condition of $E$ is the same as $F$.
\end{itemize}
\end{definition}

For example, consider the SNRE $F=\{F^{(i)}\}_{i=1}^{2}$ defined in \eqref{14}; recall that the indicator matrix is a $2\times 4$ matrix 
\begin{equation*}
I_{F}=
\begin{pmatrix}
1 & 0 & 0 & 1 \\ 
0 & 1 & 1 & 0%
\end{pmatrix}.
\end{equation*}%
Then
\begin{equation*}
I_{E}=
\begin{pmatrix}
1 & 0 & 0 & 0 \\ 
0 & 1 & 0 & 0%
\end{pmatrix}
\end{equation*}%
defines a reduced SNRE $E=\{E^{(i)}\}_{i=1}^{2}$ as follows.
\begin{equation*}
\left\{ 
\begin{array}{l}
a_{n}^{(1)}=E^{(1)}=\left( a_{n-1}^{(1)}\right) ^{2}, \\ 
a_{n}^{(2)}=E^{(2)}=\left( a_{n-1}^{(1)}\right) \left( a_{n-1}^{(2)}\right), \\ 
a_{1}^{(1)}=a_{1}^{(2)}=2.%
\end{array}%
\right.
\end{equation*}%
We remark here that the initial condition $a_{1}^{(i)}$ of the reduced SNRE $E$ is no longer the number of the items of $E^{(i)}$ for $i=1,\ldots ,k$. If an SNRE $E$ which is defined by some indicator matrix $I_{E}$ satisfying only (ii), then we also call $E$ a reduced SNRE.

Let $F=\{F^{(i)}\}_{i=1}^{k}$ be a reduced SNRE. A $k\times k$ non-negative integral matrix $M$, called the \emph{weighted adjacency matrix} of $F$, is defined as
\begin{equation}
M(i,j)=\left\{ 
\begin{array}{ll}
m,  & \text{if }a^{(j)}\text{ appears in }F^{(i)}\text{ and the degree of }%
a_{n-1}^{(j)}\text{ is }m\text{;} \\ 
0,  & \text{otherwise.}%
\end{array}%
\right.  \label{30}
\end{equation}

\subsection{Entropy Minimality Problem}

The well-known entropy minimality problem investigates when the entropy of any proper subshift space is strictly smaller than the entropy of the original shift space. This subsection reveals the necessary and sufficient condition for the entropy minimality problem under some additional conditions.

\begin{proposition} \label{prop:essential-symbols-ln-d}
Suppose $X = \mathsf{X}_{\mathcal{F}}$ is a tree-shift of finite type over $\mathcal{A} = \{a^{(1)}, a^{(2)}, \ldots, a^{(k)}\}$ with an SNRE $F$ of degree $(d, k)$. If every symbol in $\mathcal{A}$ is essential, then $h(X) = \ln d$.
\end{proposition}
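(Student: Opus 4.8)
The plan is to prove the two inequalities $h(X)\le\ln d$ and $h(X)\ge\ln d$ separately; only the second will use the hypothesis that every symbol is essential. Throughout, write $N_n=\sum_{i=1}^{k}a_n^{(i)}$; by Theorem \ref{Thm: 1} this equals, up to a shift of the index, the number $|B_n(X)|$ of height-$n$ blocks of $X$, so that $h(X)=\lim_n\frac{\ln^{2}N_n}{n}$. I also assume, without loss of generality, that every symbol of $\mathcal A$ occurs in $X$ (delete the others; this changes neither $X$ nor $d$), so that $a_n^{(i)}\ge 1$ for all $n$ and all $i$.

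For the upper bound, which in fact holds for every TSFT of degree $(d,k)$, I would use that in \eqref{27} each $F^{(i)}$ is a sum of at most $k^{d}$ monomials, every one of degree exactly $d$ in the variables $a_{n-1}^{(1)},\dots,a_{n-1}^{(k)}$. Enlarging the sum to all of $\{1,\dots,k\}^{d}$ and factoring gives $a_n^{(i)}\le\bigl(\sum_{j=1}^{k}a_{n-1}^{(j)}\bigr)^{d}=N_{n-1}^{\,d}$, hence $N_n\le k\,N_{n-1}^{\,d}$. Setting $L_n:=\ln N_n$ this reads $L_n\le d\,L_{n-1}+\ln k$, so (using $d\ge 2$) $L_n\le C\,d^{\,n}$ for a suitable constant $C$; taking logarithms once more and dividing by $n$ yields $\limsup_n\frac{\ln^{2}N_n}{n}\le\ln d$.

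For the lower bound, the key preliminary step---and the one I expect to require the most care---is that each sequence $n\mapsto a_n^{(i)}$ is non-decreasing. I would establish this by induction on $n$: the base case $a_2^{(i)}\ge a_1^{(i)}$ holds because $a_1^{(i)}$ is exactly the number of monomials of $F^{(i)}$ while each $a_1^{(i_j)}\ge 1$, and the inductive step is immediate since replacing each $a_{n-1}^{(i_j)}$ by the larger $a_n^{(i_j)}$ in $F^{(i)}$ can only increase it. Granting monotonicity, essentiality of every symbol produces an index $m_0$ such that $a_n^{(i)}\ge 2$ for all $i$ and all $n\ge m_0$. Now fix, for each $i$, one allowed child-tuple $(i_1^{\ast},\dots,i_d^{\ast})$ (one exists since $a_1^{(i)}\ge 1$) and set $c_n=\min_i a_n^{(i)}$; keeping only the corresponding monomial in $F^{(i)}$ gives $a_n^{(i)}\ge\prod_{j=1}^{d}a_{n-1}^{(i_j^{\ast})}\ge c_{n-1}^{\,d}$, hence $c_n\ge c_{n-1}^{\,d}$. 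Iterating this from $c_{m_0}\ge 2$ yields the doubly-exponential bound $N_n\ge c_n\ge 2^{\,d^{\,n-m_0}}$ for $n\ge m_0$, so $\ln^{2}N_n\ge(n-m_0)\ln d+\ln\ln 2$ and therefore $\liminf_n\frac{\ln^{2}N_n}{n}\ge\ln d$.

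Combining the two estimates shows that the limit in \eqref{26} exists and equals $\ln d$. In summary, the lower bound comes from discarding all but one monomial of each $F^{(i)}$ and the upper bound from keeping all of them; the only genuinely delicate point is the monotonicity of the $a_n^{(i)}$ (together with the harmless reduction to occurring symbols), which is precisely what upgrades essentiality from ``each $a_n^{(i)}\ge 2$ eventually'' to ``all of them simultaneously $\ge 2$ from some stage on''.
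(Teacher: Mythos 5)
Your proof is correct, but it takes a genuinely different route from the paper's. The paper's proof is a two-line application of its general machinery: it cites $h(X)\le\ln d$ from \cite{BC-2015a}, picks a reduced SNRE $E$ of $F$, observes that the weighted adjacency matrix $M_E$ has all row sums equal to $d$ (hence spectral radius $d$), and invokes Theorem \ref{Thm: 3} (using essentiality) together with Theorem \ref{Thm: 4} to conclude $h(X)=h(E)=\ln d$. You instead work directly with the recursion \eqref{27}: the upper bound comes from the elementary estimate $N_n\le k\,N_{n-1}^{\,d}$, and the lower bound from your monotonicity lemma for $n\mapsto a_n^{(i)}$, which upgrades essentiality of each symbol separately to ``all $a_n^{(i)}\ge 2$ simultaneously from some stage on'' and then gives the doubly exponential bound $c_n\ge c_{n-1}^{\,d}$. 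What your approach buys is self-containedness: you do not need the cited bound $h(X)\le\ln d$, nor Theorems \ref{Thm: 3}--\ref{Thm: 4} and the Perron--Frobenius input, and you even get existence of the limit in \eqref{26} for free; moreover, your monotonicity step honestly supplies the eventual-growth condition that the paper's appeal to Theorem \ref{Thm: 3} needs (that theorem's hypothesis concerns the sequences generated by the \emph{reduced} system $E$, whereas essentiality is stated for $F$, so your argument actually patches a point the paper glosses over). What the paper's approach buys is brevity and uniformity: the proposition falls out of the same spectral-radius framework used for the main theorems, which is the viewpoint needed later (e.g.\ in Proposition \ref{prop:saving-symbol-entropy-minimality}), whereas your counting argument is specific to the constant-row-sum situation $h=\ln d$.
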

\begin{proof}
It suffices to show that there exists a reduced SNRE $E$ of $F$ such that $h(E) = \ln d$ since $h(X) \leq \ln d$ (cf.~\cite{BC-2015a}). Let $E$ be a reduced SNRE of $F$. Then the weighted adjacency matrix $M_E$ satisfies $\sum\limits_{j=1}^k M_E(i, j) = d$ for $1 \leq i \leq d$. Since every symbol is essential, Theorem \ref{Thm: 3} infers that the entropy of $E$ is $h(E) = \ln \rho_{M_E}$, where $\rho_{M_E}$ is the spectral radius of $M_E$. This completes the proof since $\rho_{M_E} = d$.
\end{proof}

Recall that a TSFT $X = \mathsf{X}_{\mathcal{F}}$ is called a Markov tree-shift if the height of each pattern in $\mathcal{F}$ is less than or equal to two. In \cite{BC-2015}, Ban and Chang demonstrated that every TSFT is topologically conjugated to a Markov tree-shift. For the rest of this subsection, without loss of generality, we consider those Markov tree-shifts $X = \mathsf{X}_{\mathcal{F}}$ over symbol set $\mathcal{A}$ such that every symbol is essential. Proposition \ref{prop:essential-symbols-ln-d} indicates that $h(X) = \ln d$.

This subsection investigates the entropy minimality problem described as follows. Let $Y = \mathsf{X}_{\mathcal{F}'}$ be a proper subspace of $X$ such that
\begin{enumerate}[(H1)]
\item $\mathcal{F} \subsetneq \mathcal{F}'$ and $\mathcal{F}' \setminus \mathcal{F}$ consists of only one pattern;
\item if $\mathcal{A}' \subsetneq \mathcal{A}$, then $Y$ is not a TSFT over $\mathcal{A}'$.
\end{enumerate}
In other words, the forbidden set of $Y$ is obtained by adding a pattern to the forbidden set of $X$, and every symbol which is seen in $X$ remains to be used in $Y$.

\begin{problem}
Under the above conditions, what can we say if $h(Y) < h(X)$?
\end{problem}

\begin{definition}
Suppose $X = \mathsf{X}_{\mathcal{F}}$ is a TSFT over $\mathcal{A}$. A symbol $a \in \mathcal{A}$ is called a \emph{saving symbol} for $X$ if, for each pattern $(\alpha, \alpha_1, \alpha_2, \ldots, \alpha_d) \notin \mathcal{F}$ such that $\alpha \neq a$, there exists $1 \leq i \leq d$ such that $\alpha_i = a$.
\end{definition}

\begin{proposition}\label{prop:saving-symbol-entropy-minimality}
Suppose $X = \mathsf{X}_{\mathcal{F}}$ is a TSFT over $\mathcal{A}$ with a saving symbol $a$. If $Y = \mathsf{X}_{\mathcal{F}'}$ is a proper subspace of $X$ satisfying (H1) and (H2), then $h(Y) < h(X)$ if and only if $a$ is an inessential symbol for $Y$.
\end{proposition}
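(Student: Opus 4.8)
The plan is to prove the two implications separately. Write $f_n(\alpha)=|B_n(Y_\alpha)|$ for $\alpha\in\mathcal{A}$, so that $|B_n(Y)|=\sum_\alpha f_n(\alpha)$ and, by Theorem~\ref{Thm: 1}, $f_n(\alpha)=\sum_{(\alpha,\beta_1,\dots,\beta_d)\notin\mathcal{F}'}\prod_{j=1}^d f_{n-1}(\beta_j)$. Two preliminary observations: since $\mathcal{F}\subseteq\mathcal{F}'$, every allowed $2$-block of $Y$ is an allowed $2$-block of $X$, so $a$ is a saving symbol for $Y$ as well; and $Y$ lies inside the full tree-shift over $\mathcal{A}$, whose entropy is $\ln d$, so $h(Y)\le\ln d=h(X)$, the last equality being Proposition~\ref{prop:essential-symbols-ln-d}. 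It therefore suffices to prove (i) if $a$ is inessential for $Y$ then $h(Y)<\ln d$, and (ii) if $a$ is essential for $Y$ then $h(Y)=\ln d$.

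For (i), assume $f_n(a)=1$ for all $n$ and fix $\alpha\neq a$. By the saving property each monomial $\prod_j f_{n-1}(\beta_j)$ occurring in $f_n(\alpha)$ contains a factor $f_{n-1}(a)=1$, hence is a product of at most $d-1$ factors, each at most $\max_\gamma f_{n-1}(\gamma)$; as there are at most $|\mathcal{A}|^d$ monomials, $\max_\gamma f_n(\gamma)\le|\mathcal{A}|^d\bigl(\max_\gamma f_{n-1}(\gamma)\bigr)^{d-1}$. Iterating this inequality (with $f_1\equiv 1$) shows that $\ln|B_n(Y)|$ grows at most like $(d-1)^n$ for $d\ge 3$ and only polynomially for $d=2$; in either case $h(Y)\le\ln(d-1)<\ln d$ (indeed $h(Y)=0$ when $d=2$).

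For (ii), since $a$ is essential for $Y$, fix $m\ge 2$ and two distinct blocks $s_0,s_1\in B_m(Y_a)$. The crux is a counting fact forced by the saving property: for any $t\in Y$ with $t_\epsilon=a$, let $v_\ell$ (resp.\ $w_\ell$) be the number of level-$\ell$ nodes of $t$ labelled $a$ (resp.\ labelled $\neq a$), so $v_\ell+w_\ell=d^\ell$. Each of the $w_{\ell-1}$ non-$a$ nodes at level $\ell-1$ has an $a$-child, and these children are pairwise distinct, hence $v_\ell\ge w_{\ell-1}=d^{\ell-1}-v_{\ell-1}$; consequently $\max(v_\ell,v_{\ell-1})\ge d^{\ell-1}/2$, and choosing for each $k$ an index $\ell_k\in\{2k-1,2k\}$ realizing $\max(v_{2k},v_{2k-1})$ gives $v_{\ell_k}\ge d^{\ell_k}/(2d)$ with $\ell_k\to\infty$. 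Truncating $t$ to height $\ell_k+m$ and, at each of the $v_{\ell_k}$ level-$\ell_k$ nodes carrying $a$, replacing the height-$m$ subtree hanging there by $s_0$ or $s_1$, produces by the Markov property at least $2^{v_{\ell_k}}\ge 2^{d^{\ell_k}/(2d)}$ distinct blocks in $B_{\ell_k+m}(Y_a)$. Since $|B_n(Y)|\ge|B_n(Y_a)|$, evaluating the (existing) limit that defines $h(Y)$ along the subsequence $n=\ell_k+m$ yields $h(Y)\ge\lim_k\frac{\ell_k\ln d-O(1)}{\ell_k+m}=\ln d$; with $h(Y)\le\ln d$ this gives $h(Y)=\ln d=h(X)$. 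Combining (i) and (ii) proves the equivalence.

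The step I expect to be the main obstacle is (ii): converting the purely qualitative hypothesis ``$a$ is essential for $Y$'' into the doubly-exponential lower bound $|B_n(Y_a)|\ge 2^{\Omega(d^n)}$. This is precisely where the saving property does the work — it guarantees that at infinitely many levels $\ell$ at least a fraction $1/(2d)$ of the $d^\ell$ nodes of an $a$-rooted tree carry the symbol $a$, supplying the independent positions into which $s_0$ or $s_1$ can be inserted. A recurring minor nuisance is that the entropy is defined with $\ln^2=\ln\circ\ln$, so one works with the single logarithm $\ln|B_n|$ throughout and applies the outer logarithm only at the very last step.
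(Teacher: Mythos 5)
Your argument is correct, but it takes a genuinely different route from the paper's proof. For the direction ``$a$ inessential $\Rightarrow h(Y)<h(X)$'', the paper passes through the reduced SNRE machinery (Remark~\ref{Rk: 1} and Theorem~\ref{Thm: 3}), deleting the row and column indexed by $a$ and bounding the spectral radius of the resulting matrix by $d-1$; you reach the same bound $h(Y)\le\ln(d-1)$ by directly iterating the block-count recursion, which is more elementary and does not invoke the spectral theory at all. For the converse, the paper argues by contradiction: if $h(Y)<h(X)$ then Proposition~\ref{prop:essential-symbols-ln-d} forces some symbol $s$ to be inessential for $Y$, and the saving property is used only to pin down $s=a$. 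You instead prove the contrapositive directly, showing that essentiality of $a$ \emph{alone} (together with the saving property, which indeed passes from $\mathcal{F}$ to $\mathcal{F}'$) forces $h(Y)=\ln d$, via the density estimate $v_\ell+v_{\ell-1}\ge d^{\ell-1}$ and the splicing of $s_0,s_1$ at the $a$-labelled nodes to produce $2^{\Omega(d^n)}$ blocks; this is a quantitative marker-type construction that does not route through the essentiality of the remaining symbols, and in that sense it is a slightly stronger local statement than what the paper uses. What each approach buys: the paper's proof is short given Theorems~\ref{Thm: 3} and~\ref{Thm: 4} and stays inside the SNRE formalism; yours is self-contained, makes the combinatorial role of the saving symbol transparent, and avoids any delicacy about applying the reduced-SNRE results to $Y$. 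Two small points you should make explicit: the splicing step needs, beyond checking the height-two sub-blocks, that the modified patterns extend to infinite trees of $Y$ --- this holds because $Y$ is Markov (the added forbidden pattern is a $2$-block, as in the paper's standing setting) and every leaf symbol already occurs in a tree of $Y$, so one may equivalently perform the substitution on the infinite tree $t$ itself using trees of $Y_a$ extending $s_0,s_1$; and distinctness of the $2^{v_{\ell_k}}$ blocks uses that the substituted supports at distinct level-$\ell_k$ nodes are disjoint. Neither point is a gap, only a matter of spelling out what ``by the Markov property'' is doing.
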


\begin{remark}
Proposition \ref{prop:saving-symbol-entropy-minimality} can be rephrased as follows. $h(Y) < h(X)$ if and only if there is exact two patterns in $\mathcal{F}$ which start with $a$, $\mathcal{F}' = \mathcal{F} \bigcup \{(a, a_1, \ldots, a_d)\}$ with $a_i \neq a$ for some $i$, and $(a, a, \ldots, a) \notin \mathcal{F}'$. In other words, we can only remove the pattern (of height $2$) that starts with a saving symbol and make it an inessential saving symbol.
\end{remark}

\begin{proof}[Proof of Proposition \ref{prop:saving-symbol-entropy-minimality}]
Suppose that $a$ is an inessential saving symbol for $Y$. It follows immediately that $(a, \alpha_1, \ldots, \alpha_d) \in \mathcal{F}'$ if and only if $\alpha_i \neq a$ for some $1 \leq i \leq d$. Let $\widehat{F}$ be the corresponding SNRE of $Y$ and let $E$ be a reduced SNRE of $\widehat{F}$. Remark \ref{Rk: 1} and Theorem \ref{Thm: 3} infers that $h(E) = \ln \rho_A$, where $\rho_A$ is the spectral radius of $A$ and $A$ is the $(k-1) \times (k-1)$ matrix obtained by deleting the row and column indexed by $a$. Since $a$ is a saving symbol, $\sum\limits_{j=1}^{k-1} A(i, j) \leq d-1$ for $1 \leq i \leq k-1$. This demonstrates that $\rho_A \leq d-1$. Hence, $h(Y) \leq \ln (d-1) < h(X)$.

Conversely, $h(Y) < h(X)$ and Proposition \ref{prop:essential-symbols-ln-d} assert that there is a symbol $s \in \mathcal{A}$ such that $s$ is inessential for $Y$. We claim that there are exactly two patterns of height $2$ which start with $s$ and are accessible in $X$. Indeed, the assumptions (H1) and (H2) infer that there are at least two accessible patterns (of height $2$) in $X$ which start with $s$. Furthermore, $s$ is inessential for $Y$ derives that there are at most two accessible patterns in $X$ which start with $s$. The Claim then follows.

Suppose that $(s, \alpha_1, \ldots, \alpha_d), (s, \beta_1, \ldots, \beta_d) \notin \mathcal{F}$ and $(s, \alpha_1, \ldots, \alpha_d) \in \mathcal{F}'$. Since $s$ is an inessential symbol for $Y$, it is seen that $\beta_i = s$ for $1 \leq i \leq d$. If $s \neq a$, then $a$ being a saving symbol concludes that $\beta_i = a$ for some $1 \leq i \leq d$. The essentiality of $a$ infers that $s$ is essential for $Y$, which gets a contradiction. The proof is then complete.
\end{proof}

\section{Proofs of Main Results}

This section is dedicated to the proofs of Theorems \ref{Thm: 2} and \ref{Thm: 4}. Some useful results are presented herein. Proposition \ref{Prop: 1} is a useful tool to compute $h(F)$.

\subsection{Weighted adjacency matrix and its sprctral radius}

\begin{proposition}\label{Prop: 1}
Let $F$ be an SNRE, then 
\begin{equation}
h(F)=\lim_{n\rightarrow \infty }\frac{\ln \sum_{i=1}^{k}\ln a_{n}^{(i)}}{n}.  \label{31}
\end{equation}
\end{proposition}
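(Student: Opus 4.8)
The plan is to reduce the claimed identity to an elementary two-sided comparison of the quantities
\[
S_n := \sum_{i=1}^{k} a_n^{(i)} \qquad\text{and}\qquad T_n := \sum_{i=1}^{k} \ln a_n^{(i)} .
\]
Since in \eqref{4} one has $\ln^2=\ln\circ\ln$, the definition reads $h(F)=\lim_{n\to\infty}\frac{\ln\ln S_n}{n}$, so it suffices to prove $\lim_{n\to\infty}\frac{\ln\ln S_n}{n}=\lim_{n\to\infty}\frac{\ln T_n}{n}$, the existence of either limit forcing that of the other. Every $a_n^{(i)}$ is a positive integer, hence $\ln a_n^{(i)}\ge 0$; writing $m_n:=\max_{1\le i\le k}a_n^{(i)}$, so that $\ln m_n=\max_{1\le i\le k}\ln a_n^{(i)}$, the first step is to record the trivial sandwiches
\[
m_n\le S_n\le k\,m_n,\qquad \ln m_n\le T_n\le k\ln m_n,
\]
the left inequalities expressing that a maximum is at most a sum of nonnegative terms and the right ones that such a sum has $k$ terms. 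Logging the first gives $\ln m_n\le\ln S_n\le\ln m_n+\ln k$.

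The substantive case is $m_n\to\infty$, which in particular is forced whenever $h(F)>0$ (then $\ln\ln S_n\ge cn$ eventually for some $c>0$, so $\ln S_n\to\infty$ and $m_n\ge S_n/k\to\infty$). For all large $n$ we then have $\ln S_n>1$ and $T_n\ge\ln m_n>1$, so both iterated logarithms are defined; dividing the second sandwich by $\ln S_n$ and invoking the first yields
\[
\frac{\ln m_n}{\ln m_n+\ln k}\;\le\;\frac{T_n}{\ln S_n}\;\le\;k .
\]
As soon as $\ln m_n\ge\ln k$ this gives $\tfrac12\le T_n/\ln S_n\le k$, hence
\[
\bigl|\ln T_n-\ln\ln S_n\bigr|=\Bigl|\ln\tfrac{T_n}{\ln S_n}\Bigr|\le\max(\ln 2,\ln k)=:C
\]
for all large $n$. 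Dividing by $n$ and letting $n\to\infty$ gives $\bigl|\frac{\ln\ln S_n}{n}-\frac{\ln T_n}{n}\bigr|\le C/n\to 0$, so the two limits coincide and equal $h(F)$.

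In the complementary case $m_n\not\to\infty$ the sequence $S_n$ is bounded along a subsequence, so the presupposed limit $h(F)=\lim_n\frac{\ln\ln S_n}{n}$ must be $0$; one then checks by the same one-sided estimates ($T_n\le k\ln m_n\le k\ln S_n$ for the upper bound, and $T_n\ge\ln 2$ once some symbol is nontrivial for the lower bound) that $\frac{\ln T_n}{n}\to 0$ as well, so the identity still holds. I expect the only genuinely delicate point to be exactly this case split together with the "for $n$ large enough" thresholds needed to keep the iterated logarithms real; the arithmetic heart of the statement is nothing more than $\tfrac12\le T_n/\ln S_n\le k$, which drops out of the two sandwiches.
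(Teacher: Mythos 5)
Your proof is correct and follows essentially the same route as the paper's: both arguments sandwich $T_n=\sum_{i=1}^k\ln a_n^{(i)}$ between $\ln m_n$ and $k\ln S_n$ (the paper via the AM--GM inequality, you via $T_n\le k\ln m_n\le k\ln S_n$) and use $m_n\le S_n\le k\,m_n$ to conclude that $\ln T_n$ and $\ln\ln S_n$ differ by a bounded amount, so the two growth rates coincide. Your explicit treatment of the thresholds needed for the iterated logarithms and of the degenerate case $m_n\not\to\infty$ is slightly more careful than the paper's proof, which tacitly assumes nondegeneracy, but the underlying argument is the same.
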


\begin{proof}
Since for every $n,k\in \mathbb{N}$ ,
\begin{equation*}
a_{n}^{(1)}a_{n}^{(2)}\cdots a_{n}^{(k)}\leq \left( \frac{%
\sum_{i=1}^{k}a_{n}^{(i)}}{k}\right) ^{k},
\end{equation*}%
we derive that 
\begin{equation*}
\sum_{i=1}^{k}\ln a_{n}^{(i)}\leq k\left( \ln \sum_{i=1}^{k}a_{n}^{(i)}-\ln
k\right)
\end{equation*}%
and 
\begin{equation*}
\lim_{n\rightarrow \infty }\frac{\ln \sum_{i=1}^{k}\ln a_{n}^{(i)}}{n}\leq
\lim_{n\rightarrow \infty }\frac{\ln ^{2}\sum_{i=1}^{k}a_{n}^{(i)}}{n}=h(F)%
\text{.}
\end{equation*}%
Conversely, let $a_{n}=\max_{1\leq i\leq k}a_{n}^{(i)}$. The inequality 
\begin{equation*}
a_{n}\leq \sum_{i=1}^{k}a_{n}^{(i)}\leq ka_{n},
\end{equation*}%
yields that 
\begin{equation}
\lim_{n\rightarrow \infty }\frac{\ln ^{2}\sum_{i=1}^{k}a_{n}^{(i)}}{n}%
=\lim_{n\rightarrow \infty }\frac{\ln ^{2}a_{n}}{n}\text{.}  \label{28}
\end{equation}%
On the other hand, we have 
\begin{equation}
\sum_{i=1}^{k}\ln a_{n}^{(i)}=\ln \prod_{i=1}^{k}a_{n}^{(i)}\geq \ln
\max_{1\leq i\leq k}a_{n}^{(i)}=\ln a_{n}\text{.}  \label{29}
\end{equation}%
Combining \eqref{28} with \eqref{29} concludes that 
\begin{equation*}
\lim_{n\rightarrow \infty }\frac{\ln \sum_{i=1}^{k}\ln a_{n}^{(i)}}{n}\geq
\lim_{n\rightarrow \infty }\frac{\ln ^{2}a_{n}}{n}=\lim_{n\rightarrow \infty
}\frac{\ln ^{2}\sum_{i=1}^{k}a_{n}^{(i)}}{n}=h(F)\text{.}
\end{equation*}%
The proof is thus complete.
\end{proof}

\begin{remark}\label{Rk: 1}
Suppose we partition the symbol set $\mathcal{A}$ as
\begin{equation}
\mathcal{A}=\mathcal{A}_{E}\cup \mathcal{A}_{I},  \label{38}
\end{equation}%
where $\mathcal{A}_{E}$ is the collection of the essential symbols in $\mathcal{A}$ and $\mathcal{A}_{I}$ collects the inessential symbols (Definition \ref{Def: 2}), it follows from Proposition \ref{Prop: 1} that
\begin{equation*}
h(F)=\lim_{n\rightarrow \infty }\frac{\ln \sum_{i=1}^{k}\ln a_{n}^{(i)}}{n}%
=\lim_{n\rightarrow \infty }\frac{\ln \sum_{a^{(i)}\in \mathcal{A}%
_{E}}a_{n}^{(i)}}{n}\text{.}
\end{equation*}%
That is, the entropy $h(F)$ is the growth rate of the sum of all essential symbols. In this case, we say that $h(F)$ is \emph{supported} on $\mathcal{A}_{E}.$
\end{remark}

Let $\Omega $ be a one-dimensional subshift of finite type and let $A=A_{\Omega} $ be the corresponding adjacency matrix, the classical result in symbolic dynamics shows that the topological entropy of $\Omega$ is $h(\Omega)=\ln \lambda_{A}$, where $\lambda_A$ is the maximal eigenvalue of $A$ (cf.~\cite{LM-1995}). Theorem \ref{Thm: 3} is an analogous result for reduced SNREs.

\begin{theorem}\label{Thm: 3}
Let $F=\{F^{(i)}\}_{i=1}^{k}$ be a reduced SNRE and let $M$ be the
corresponding weighted adjacency matrix which is defined in \eqref{30}. If there exists $N \in \mathbb{N}$ such that $a_{n}^{(i)}>1$ for all $i=1,\ldots ,k$ and $n \geq N$, then 
\begin{equation*}
h(F)=\ln \lambda _{M}\text{,}
\end{equation*}%
where $\lambda _{M}$ is the spectral radius of $M$.
\end{theorem}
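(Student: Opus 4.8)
\subsection*{Proof plan for Theorem~\ref{Thm: 3}}

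The plan is to linearise the recursion by passing to logarithms and then read off the growth rate from the spectral theory of the nonnegative matrix $M$. Since $F$ is a reduced SNRE, each $F^{(i)}$ is a single monomial $\prod_{j=1}^{k}\bigl(a_{n-1}^{(j)}\bigr)^{M(i,j)}$, where $M(i,j)$ is the degree of $a_{n-1}^{(j)}$ in $F^{(i)}$ and $\sum_{j}M(i,j)=d$. Setting $b_{n}^{(i)}:=\ln a_{n}^{(i)}$ and $b_{n}:=(b_{n}^{(1)},\dots,b_{n}^{(k)})^{T}$, the multiplicative recursion $a_{n}^{(i)}=\prod_{j}(a_{n-1}^{(j)})^{M(i,j)}$ turns into the linear recursion $b_{n}=Mb_{n-1}$ for $n\geq 2$, so that $b_{n}=M^{\,n-N}b_{N}$ for all $n\geq N$. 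By hypothesis $b_{N}$ is a strictly positive vector, which is the key point that makes the rest work.

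Next I invoke Proposition~\ref{Prop: 1}: $h(F)=\lim_{n\to\infty}\frac{\ln\sum_{i}\ln a_{n}^{(i)}}{n}$. For $n\geq N$ we have $\ln a_{n}^{(i)}>0$, hence $\sum_{i}\ln a_{n}^{(i)}=\|b_{n}\|_{1}$, the $\ell^{1}$-norm of $b_{n}$. Thus the computation of $h(F)$ is reduced to showing that $\lim_{m\to\infty}\|M^{m}b_{N}\|_{1}^{1/m}=\lambda_{M}$ (after which $\frac{n-N}{n}\to 1$ gives $h(F)=\ln\lambda_{M}$). Note first that $\lambda_{M}=\rho(M)>0$: indeed $M^{m}b_{N}=b_{N+m}>0$ for every $m$, so $M$ is not nilpotent; in fact $M\mathbf{1}=d\,\mathbf{1}$ forces $\rho(M)=d$ since all row sums equal $d$.

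The asymptotic equality is then a standard two-sided estimate. Upper bound: $\|M^{m}b_{N}\|_{1}\leq\|M^{m}\|_{1}\,\|b_{N}\|_{1}$, and Gelfand's formula $\|M^{m}\|_{1}^{1/m}\to\rho(M)$ yields $\limsup_{m}\|M^{m}b_{N}\|_{1}^{1/m}\leq\rho(M)$. Lower bound: let $u\geq 0$, $u\neq 0$, be a Perron--Frobenius left eigenvector of $M$ for the eigenvalue $\rho(M)$ (equivalently, a Perron eigenvector of $M^{T}$); since $b_{N}>0$ and $u\gneq 0$ we get $u^{T}b_{N}>0$, and from $u^{T}M^{m}b_{N}=\rho(M)^{m}u^{T}b_{N}\leq\|u\|_{\infty}\,\|M^{m}b_{N}\|_{1}$ we conclude $\liminf_{m}\|M^{m}b_{N}\|_{1}^{1/m}\geq\rho(M)$. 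Taking logarithms finishes the proof.

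The main obstacle is exactly the lower bound: $M$ need not be irreducible, so one cannot expect a strictly positive eigenvector, and one has to argue with the general (possibly reducible) Perron--Frobenius theory. The hypothesis of the theorem, that every $a_{n}^{(i)}>1$ for $n\geq N$, is what enters here in an essential way: it guarantees $b_{N}>0$, so that $u^{T}b_{N}\neq 0$ for the nonnegative left eigenvector $u$ and the spectral radius is genuinely detected by the orbit of $b_{N}$. Everything else is routine.
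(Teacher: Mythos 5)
Your proposal is correct and follows essentially the same route as the paper: pass to logarithms so that $b_{n}=Mb_{n-1}$, invoke Proposition~\ref{Prop: 1}, and identify the exponential growth rate of $\|b_{n}\|_{1}=\|M^{n-N}b_{N}\|_{1}$ with $\ln\lambda_{M}$. The only difference is that where the paper asserts $\lim_{n}\frac{1}{n}\ln\sum_{i}\ln a_{n}^{(i)}=\lim_{n}\frac{1}{n}\ln\sum_{i,j}M^{n-1}(i,j)=\ln\lambda_{M}$ directly (implicitly using the strict positivity of $b_{N}$), you make this step explicit via Gelfand's formula for the upper bound and a nonnegative left Perron eigenvector paired against $b_{N}>0$ for the lower bound, which is a careful justification of the same estimate rather than a different argument.
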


\begin{proof}
Let $F$ be a reduced SNRE. That is, $F$ is defined by an indicator matrix $%
I_{F}$ which satisfies the condition (ii) of Definition \ref{Def: 1}. We write the
SNRE $F$ in the following form. 
\begin{equation*}
F=\{F^{(i)}=\left( a_{n-1}^{(1)}\right) ^{m_{1}^{(i)}}\left(
a_{n-1}^{(2)}\right) ^{m_{2}^{(i)}}\cdots \left( a_{n-1}^{(k)}\right)
^{m_{k}^{(i)}}\}\text{,}
\end{equation*}%
where $(m_{1}^{(i)},\ldots ,m_{k}^{(i)})$ is a non-negative integral $k$%
-tuple for all $i$. Define 
\begin{equation*}
b_{n}:=(\ln a_{n}^{(1)},\ldots ,\ln a_{n}^{(k)})^{T}\text{.}
\end{equation*}%
It is seen that $b_{n}=Mb_{n-1}$. Combining the facts of $b_{n}=M^{n-1}b_{1}$, \eqref{31}, and $a_{n}^{(i)} > 1$ for $n$ large enough yields that
\begin{equation*}
h(F)=\lim_{n\rightarrow \infty }\frac{\ln \sum_{i=1}^{k}\ln a_{n}^{(i)}}{n}%
=\lim_{n\rightarrow \infty }\frac{\ln \sum_{i,j=1}^{k}M^{n-1}(i,j)}{n}=\ln
\lambda _{M}\text{.}
\end{equation*}%
This completes the proof.
\end{proof}

\subsection{Proof of Theorem \protect\ref{Thm: 4}}

Theorem \ref{Thm: 3} reveals that the computation of the entropy of a reduced SNRE is analogous to the classical result of SFTs. Theorem \ref{Thm: 4} provides the method for the computation of $h(F)$ for general $F$; that is, the entropy of $h(X)$ (Theorem \ref{Thm: 2}). The proof of Theorem \ref{Thm: 4} is presented herein.

\begin{proof}[Proof of Theorem \protect\ref{Thm: 4}]
Set 
\begin{equation*}
h=\max \{\ln \lambda _{M_{E}}:E\text{ is reduced from }F\}\text{.}
\end{equation*}%
Let $b_{n}^{(i)}=\ln a_{n}^{(i)}$ for all $1\leq i\leq k$ and $F^{(i)}$ be
arranged as the following form. 
\begin{equation}
F^{(i)}=\sum_{j=1}^{r_{i}}\beta _{j}^{(i)}F_{j}^{(i)}, \quad \beta _{j}^{(i)} \neq 0, i=1,\ldots, k.  \label{23}
\end{equation}%
Since the computation of $h(F)$ is supported on those essential symbols (see Remark \ref{Rk: 1}), without loss of generality, we assume that $b_{1}^{(i)}\geq 2$ for $i=1,\ldots ,k$. The existence of the limit of $h(X)=h(F)$ infers that there is a subsequence $\{a_{n_{\ell}}^{(j_{i})}\}_{i=1}^{k}$ satisfying
\begin{equation}
a_{n_{\ell}}^{(j_{1})}\geq a_{n_{\ell}}^{(j_{2})}\geq \cdots \geq
a_{n_{\ell}}^{(j_{k})} \quad \text{for} \quad \ell \in \mathbb{N}  \label{17}
\end{equation}%
and
\begin{equation*}
h(F)=\lim_{n_{\ell}\rightarrow \infty }\frac{\ln \sum_{i=1}^{k}\ln
a_{n_{\ell}}^{(j_{i})}}{n_{\ell}}\text{.}
\end{equation*}%
For simplicity, we may assume that $j_{i}=i$ for $i=1,\ldots ,k$ and $n_{\ell} = \ell$ for $\ell \in \mathbb{N}$. Thus, \eqref{23} can be rewritten as follows. 
\begin{equation*}
F^{(i)}=F_{1}^{(i)}\left( \beta _{1}^{(i)}+\sum_{l=2}^{r_{i}}\beta _{l}^{(i)}%
\frac{F_{l}^{(i)}}{F_{1}^{(i)}}\right) =F_{1}^{(i)}c_{n-1}^{(i)}\text{,}
\end{equation*}%
where 
\begin{equation*}
c_{n-1}^{(i)}=\beta _{1}^{(i)}+\sum_{l=2}^{r_{i}}\beta _{l}^{(i)}\frac{%
F_{l}^{(i)}}{F_{1}^{(i)}}\text{.}
\end{equation*}%
It follows from \eqref{17} that
\begin{equation}
2\leq c_{n-1}^{(i)}\leq \sum_{l=1}^{r_{i}}\beta _{l}^{(i)}\leq C\text{ for
all }n\in \mathbb{N}\text{,}  \label{20}
\end{equation}%
where 
\begin{equation*}
C=r\beta ,r:=\max_{1\leq i\leq k}r_{i}\text{ and }\beta :=\max_{1\leq i\leq
k,1\leq l\leq r_{i}}\beta _{l}^{(i)}\text{.}
\end{equation*}%
That is, 
\begin{equation}
\left\{ 
\begin{array}{c}
b_{n}^{(1)}=\ln a_{n}^{(1)}=\ln F_{1}^{(1)}+\ln c_{n-1}^{(1)}, \\ 
b_{n}^{(2)}=\ln a_{n}^{(2)}=\ln F_{1}^{(2)}+\ln c_{n-1}^{(2)}, \\ 
\vdots  \\ 
b_{n}^{(k)}=\ln a_{n}^{(k)}=\ln F_{1}^{(k)}+\ln c_{n-1}^{(k)}.%
\end{array}%
\right.   \label{18}
\end{equation}

Let $b_{n}=\left( b_{n}^{(1)},\ldots ,b_{n}^{(k)}\right) ^{T}$. Notably, for $i = 1, \ldots, k$, $F_{1}^{(i)}$ is of the form
\begin{equation*}
F_{1}^{(i)}=\left( a_{n-1}^{(1)}\right) ^{m_{1}^{(i)}}\cdots \left(a_{n-1}^{(k)}\right) ^{m_{k}^{(i)}};
\end{equation*}%
we have 
\begin{equation*}
\ln F_{1}^{(i)}=\sum_{l=1}^{k}m_{l}^{(i)}\ln
a_{n-1}^{(l)}=\sum_{l=1}^{k}m_{l}^{(i)}b_{n-1}^{(l)}\text{.}
\end{equation*}%
Thus \eqref{18} can be represented as
\begin{equation}
b_{n}=Mb_{n-1}+\ln c_{n-1}\text{,}  \label{19}
\end{equation}%
where 
\begin{equation*}
\ln c_{n}=(\ln c_{n}^{(1)},\ldots ,\ln c_{n}^{(k)})^{T},
\end{equation*}%
and $M$ is the weighted adjacency matrix of $E=\{F_{1}^{(i)}\}_{i=1}^{k}$.
Iterate \eqref{19} we obtain%
\begin{equation}
b_{n}=M^{n-1}b_{1}+M^{n-2}c_{1}+\cdots +c_{n-1}\text{.}
\end{equation}%
Let $\lambda =\lambda _{M}$ it follows from Proposition 4.2.1 of \cite%
{LM-1995} and \eqref{20} that
\begin{eqnarray*}
\sum_{i=1}^{k}b_{n}^{(i)} &=&\left\Vert b_{n}\right\Vert \leq
d_{0}\left\Vert \sum_{i=0}^{n-1}M^{i}\right\Vert \leq
d_{0}\sum_{i=0}^{n-1}\left\Vert M^{i}\right\Vert \\
&\leq &d_{1}\left( \sum_{i=0}^{n-1}\lambda ^{i}\right) \leq d_{2}\lambda
^{n},
\end{eqnarray*}%
where $d_{0}$, $d_{1}$, and $d_{2}$ only depend on the dimension of $M$ and $%
k $. Thus we have 
\begin{equation}
\sum_{i=1}^{k}b_{n}^{(i)}\leq d_{2}\lambda ^{n}\text{.}  \label{21}
\end{equation}%
Combining Proposition \ref{Prop: 1} with \eqref{21} infers that
\begin{equation*}
h(F)=\lim_{n\rightarrow \infty }\frac{\ln \sum_{i=1}^{k}b_{n}^{(i)}}{n}\leq
\lim_{n\rightarrow \infty }\frac{\ln d_{2}\lambda ^{n}}{n}=\ln \lambda \text{%
.}
\end{equation*}

Similarly, combining \eqref{20}, \eqref{19} with the fact that $%
b_{1}^{(i)}\geq 2$ for all $i$ we have 
\begin{equation*}
\sum_{i=1}^{k}b_{n}^{(i)}\geq d_{3}\lambda ^{n}\text{,}
\end{equation*}%
for some $d_{3}>0$, which implies 
\begin{equation*}
h(F)\geq \ln \lambda \text{.}
\end{equation*}

Thus $h(F)=\ln \lambda $. That is $h(F)$ is the logarithm of the spectral radius of some integral matirx $M$ which is a weighted adjacency matrix of some SNRE $E$ reduced from $F$. Thus we conclude that 
\begin{equation}
h(F)\leq h\text{.}  \label{10}
\end{equation}%
For the converse, suppose $E$ is a reduced SNRE of $F$ with $h=\ln \lambda
_{M_{E}}$. From the (i) of Definition \ref{Def: 1} we have $E^{(i)}\leq
F^{(i)}$ for all $i$. It implies that 
\begin{equation}
h(F)\geq h(E)=\ln \lambda _{M_{E}}=h\text{.}  \label{11}
\end{equation}%
Combining \eqref{10} with \eqref{11} yields \eqref{12}. The proof is thus
completed.
\end{proof}

\subsection{Proof of Theorem \protect\ref{Thm: 2}}

The proof of Theorem \ref{Thm: 2} is presented.

\begin{proof}[Proof of Theorem \protect\ref{Thm: 2}]
Let $X$ be a TSFT and let $F$ be its SNRE. Since $h(X^{%
\mathcal{B}})=h(F)$ from Theorem \ref{Thm: 1}, thus it follows from Theorem %
\ref{Thm: 4} that 
\begin{equation*}
h(X)=h(F)=h(E)=\ln \lambda _{M_{E}}=:\ln \lambda _{E}
\end{equation*}%
for some reduced SNRE $E$ of $F$. If the weighted adjacency matrix $M_E$ is primitive, the Perron-Frobenius theorem concludes that $\lambda _{E}\in \mathcal{P}$. That is, $\ln \lambda _{E}\in \mathcal{E}$ (recall \eqref{35}). If $M_E$ is irreducible or reducible, Corollary of Theorem 3 of \cite{Lind-ETDS1984} shows that that $\lambda_{E}=\lambda ^{\frac{1}{p}}$ for some $\lambda \in \mathcal{P}$ and $1<p\in \mathbb{N}$. Thus, $\ln \lambda _{E}\in \mathcal{E}$. Conversely, we set $h=\ln \lambda ^{\frac{1}{p}}\in \mathcal{E}$. If $p=1$, then Theorem 1 of \cite{Lind-ETDS1984} is applied to show that there is a primitive non-negative integral matrix with $\lambda $ is its spectral radius. More precisely, there is an $m\times m$ primitive non-negative integral matrix $M$ such that $\lambda $ is its spectral radius. Construct a new matrix $V=V_{M}$ as follows. Denote by 
\begin{equation}
d:=d(M)=\max_{1\leq i\leq k}\sum_{j=1}^{k}M(i,j)\text{.}  \label{37}
\end{equation}%
Define a $\left( k+1\right) \times \left( k+1\right) $ non-negative integral
matrix $V$ as follows.%
\begin{equation*}
V(i,j)=\left\{ 
\begin{array}{ll}
M(i,j), & \text{if }1\leq i\leq k,1\leq j\leq k\text{;} \\ 
d-\sum_{j=1}^{r}M(i,j),  & \text{if }1\leq i\leq k,j=k+1\text{;} \\ 
0, & \text{if }i=k+1,1\leq j\leq k\text{;} \\ 
d, & \text{if }i=k+1,j=k+1\text{.}%
\end{array}%
\right. 
\end{equation*}

It can be easily checked that 
\begin{equation*}
\sum_{j=1}^{k+1}V(i,j)=d\text{ for }1\leq i\leq k+1\text{.}
\end{equation*}%
Introduce the symbol set $\mathcal{A}$ and an SNRE $F=\{F^{(i)}%
\}_{i=1}^{k+1} $ according to $V$ as follows. Let 
\begin{equation*}
\mathcal{A}=\{a^{(i)}\}_{i=1}^{k}\cup \{a^{(k+1)}\}\text{.}
\end{equation*}%
For $i=1,\ldots ,k$, we define 
\begin{eqnarray*}
F^{(i)} &=&\left( a_{n-1}^{(1)}\right) ^{V(i,1)}\cdots \left(
a_{n-1}^{(k)}\right) ^{V(i,k)}\left( a_{n-1}^{(k+1)}\right)
^{d-V(i,k+1)}+\left( a_{n-1}^{(k+1)}\right) ^{d}\text{, } \\
F^{(k+1)} &=&\left( a_{n-1}^{(k+1)}\right) ^{d}\text{.}
\end{eqnarray*}

Since $\sum_{j=1}^{k}V(i,j)=d$ for all $i$, each $F^{(i)}$ is a polynomial
of degree $d$. It implies that $F=\{F^{(i)}\}_{i=1}^{k+1}$ is a $(d,k+1)$%
-SNRE with the initial conditions of $F$ is 
\begin{equation}
a_{1}^{(i)}=2\text{ for }1\leq i\leq k\text{ and }a_{1}^{(k+1)}=1\text{.}
\label{39}
\end{equation}%
Combining \eqref{39} with the fact that $a_{n}^{(k+1)}=1$ for all $n$ (since 
$a_{n}^{(k+1)}$ only connect to itself), we conclude that $a_{n}^{(k+1)}$
must be the least element with respect to the lexicographic order defined
in the proof of Theorem \ref{Thm: 4} (since $a_{1}^{(i)}\geq 2\geq
a_{1}^{(1)}$ for $i=1,\ldots ,k$ and $a_{n}^{(k+1)}=1$). Therefore, we
obtain that the entropy $h(F)$ is attained at the logarithm of the spectral
radius of the weighted adjacency matrix $M$ corresponding to a reduced SNRE $E$ of $F
$, where $E=\{E^{(i)}\}_{i=1}^{k+1}$ is as follows.%
\begin{eqnarray*}
E^{(i)} &=&\left( a_{n-1}^{(1)}\right) ^{V(i,1)}\cdots \left(
a_{n-1}^{(k)}\right) ^{V(i,k)}\left( a_{n-1}^{(k+1)}\right) ^{d-V(i,k+1)},%
\text{ }1\leq i\leq k, \\
E^{(k+1)} &=&\left( a_{n-1}^{(k+1)}\right) ^{d}\text{.}
\end{eqnarray*}%
Meanwhile, $M$ is of the form%
\begin{equation*}
M_{E}=\left( 
\begin{array}{cc}
M & U \\ 
0 & d%
\end{array}%
\right) \text{,}
\end{equation*}%
where $U$ is a $k\times 1$ matrix with entries are $U(i,k+1)=d-V(i,k+1)$ for 
$1\leq i\leq k$.

Let
\begin{eqnarray*}
\mathcal{B} &=&\left\{(a^{(i)}, \overbrace{a^{(1)}, \cdots, a^{(1)}}^{V(i,1)\text{-times}}, \cdots, \overbrace{a^{(k+1)}, \cdots, a^{(k+1)}}^{d-V(i,k+1)\text{-times}})\right\}_{i=1}^{k} \\
&&\bigcup \left\{(a^{(i)}, \overbrace{a^{(k+1)}, \cdots, a^{(k+1)}}^{d\text{-times}})\right\}_{i=1}^{k}\bigcup \left\{a^{(k+1)}, \overbrace{a^{(k+1)}, \cdots, a^{(k+1)}}^{d\text{-times}}\right\}
\end{eqnarray*}
and let $\mathcal{F} = \mathcal{A}^d \setminus \mathcal{B}$. We claim that the TSFT $X = \mathsf{X}_{\mathcal{F}}$ carries entropy $h(X)=\ln \lambda $. Indeed, it follows from Proposition \ref{Prop: 1}, Theorems \ref{Thm: 4} and \ref{Thm: 1}, and the fact of $a_{n}^{(k+1)}=1$ for all $n$ that
\begin{equation*}
h(X)=h(F)=\lim_{n\rightarrow \infty }\frac{\ln
\sum_{i=1}^{k}\ln a_{n}^{(i)}}{n}=h(E)=\ln \lambda =h.
\end{equation*}%
This shows that $h$ is a entropy of some tree SFT $X$. Hence, the claim holds. Finally, if $p>1$, i.e., $h=\ln \lambda ^{\frac{1}{p}}\in
\mathcal{E}$, we first note that the above argument of the constructing
the TSFT $X$ is also applied to the case where $M$ is a
non-negative irreducible integral matrix. Theorem 3 of \cite{Lind-ETDS1984}
says that a positive number is the spectral radius of an irreducible
non-negative integral matrix if and only if some positive integral power of
it is a Perron number. Since $\left( \lambda ^{\frac{1}{p}}\right)
^{p}=\lambda \in \mathcal{P}$, $\lambda ^{\frac{1}{p}}$ must be a a
spectral radius of some non-negative irreducible integral matrix, say $M$.
Using the same argument as above one could construct a TSFT $X$ such that $h(X)=h(F)=\ln \lambda $. The proof is thus complete.
\end{proof}

\subsection{Example}

\begin{example}
Let $h=\ln \lambda $, where $\lambda $ is the spectral radius of
\begin{equation*}
M=\left( 
\begin{array}{ccc}
1 & 1 & 0 \\ 
0 & 0 & 1 \\ 
2 & 1 & 0%
\end{array}%
\right) \in \mathbf{M}_{3\times 3}\text{.}
\end{equation*}%
Since $d=3$ (defined in \eqref{37}), we construct a $4\times 4$ matrix $V $ as
\begin{equation*}
V=\left( 
\begin{array}{cccc}
1 & 1 & 0 & 1 \\ 
0 & 0 & 1 & 2 \\ 
2 & 1 & 0 & 0 \\ 
0 & 0 & 0 & 3%
\end{array}%
\right) =\left( 
\begin{array}{cc}
M & U \\ 
0 & 3%
\end{array}%
\right).
\end{equation*}

The $(3,4)$-SNRE can be constructed as follows. 
\begin{equation*}
\left\{ 
\begin{array}{l}
a_{n}^{(1)}=a_{n-1}^{(1)}a_{n-1}^{(2)}a_{n-1}^{(4)}+\left(
a_{n-1}^{(4)}\right) ^{3},a_{2}^{(1)}=2 \\ 
a_{n}^{(2)}=a_{n-1}^{(3)}\left( a_{n-1}^{(4)}\right) ^{2}+\left(
a_{n-1}^{(4)}\right) ^{3},a_{2}^{(2)}=2 \\ 
a_{n}^{(3)}=\left( a_{n-1}^{(1)}\right) ^{2}a_{n-1}^{(2)}+\left(
a_{n-1}^{(4)}\right) ^{3},a_{2}^{(3)}=2 \\ 
a_{n}^{(4)}=\left( a_{n-1}^{(4)}\right) ^{3},a_{2}^{(4)}=1.%
\end{array}%
\right.
\end{equation*}%
Meanwhile, the corresponding set $\mathcal{B}$ can also be defined as
\begin{align*}
\mathcal{B} =\bigg\{ &( a^{(1)}, a^{(1)}, a^{(2)}, a^{(4)}), (a^{(1)}, a^{(4)}, a^{(4)}, a^{(4)}), (a^{(2)}, a^{(3)}, a^{(4)}, a^{(4)}), \\
& ( a^{(2)}, a^{(4)}, a^{(4)}, a^{(4)}), (a^{(3)}, a^{(1)}, a^{(1)}, a^{(2)}), (a^{(3)}, a^{(4)}, a^{(4)}, a^{(4)}), \\
& ( a^{(4)}, a^{(4)}, a^{(4)}, a^{(4)}) \bigg\}.
\end{align*}%
The same argument as seen in the proof of Theorem \ref{Thm: 2} shows that the TSFT $X = \mathsf{X}_{\mathcal{F}}$ with $\mathcal{F} = \mathcal{A}^4 \setminus \mathcal{B}$ is capable of the entropy $\ln \lambda$.
\end{example}

\section*{Acknowledgment}

The author would like to thank Ms.~Nai-Zhu Huang for the discussion during the preparation of this work.

\bibliographystyle{amsplain}
\bibliography{../../grece}

\end{document}